\documentclass[leqno, 12pt]{amsart}
\usepackage{amsmath,amsfonts,amsthm,amssymb,indentfirst,mathrsfs}
\usepackage{xy} \xyoption{all}

\setlength{\textwidth}{6.5in} \setlength{\textheight}{9in}
\setlength{\evensidemargin}{0in} \setlength{\oddsidemargin}{0in}
\setlength{\topmargin}{-.5in}

\newtheorem{theorem}{Theorem}
\newtheorem{lemma}[theorem]{Lemma}
\newtheorem{definition}[theorem]{Definition}
\newtheorem{corollary}[theorem]{Corollary}
\newtheorem{proposition}[theorem]{Proposition}

\theoremstyle{definition}
\newtheorem{example}[theorem]{Example}

\newcommand{\End}{\mathrm{End}}

\newcommand{\tr}{\mathrm{tr}}

\newcommand{\path}{\mathrm{Path}}
\newcommand{\clpath}{\mathrm{ClPath}}
\newcommand{\R}{\mathbb{R}}

\newcommand{\C}{\mathbb{C}}
\newcommand{\N}{\mathbb{N}}
\newcommand{\M}{\mathbb{M}}

\newcommand{\so}{\mathbf{s}}
\newcommand{\ra}{\mathbf{r}}

\begin{document}

\title{Traces on Semigroup Rings and Leavitt Path Algebras}
\author{Zachary Mesyan and Lia Va\v{s}}

\address{Zachary Mesyan, Department of Mathematics, University of Colorado, Colorado Springs, CO 80918, USA }
\email{zmesyan@uccs.edu}

\address{Lia Va\v{s}, Department of Mathematics, Physics and Statistics, University of the Sciences, Philadelphia, PA 19104, USA}
\email{l.vas@usciences.edu}

\begin{abstract}
The trace on matrix rings, along with the augmentation map and Kaplansky trace on group rings, are some of the many examples of linear functions on algebras that vanish on all commutators. We generalize and unify these examples by studying traces on (contracted) semigroup rings over commutative rings. We show that every such ring admits a minimal trace (i.e., one that vanishes \emph{only} on sums of commutators), classify all minimal traces on these rings, and give applications to various classes of semigroup rings and quotients thereof. We then study traces on Leavitt path algebras (which are quotients of contracted semigroup rings), where we describe all linear traces in terms of central maps on graph inverse semigroups and, under mild assumptions, those Leavitt path algebras that admit faithful traces.

\medskip

\noindent
\emph{Keywords:}  trace, faithful trace, semigroup ring, graph inverse semigroup, Leavitt path algebra, Cohn path algebra, involution

\noindent
\emph{2010 MSC numbers:} 16S36, 16W10 (primary); 16S50, 20M25 (secondary)
\end{abstract}

\maketitle

\section{Introduction}

Perhaps the most familiar example of a linear map $t$ that is central (i.e., satisfies $t(ab)=t(ba)$ for all $a,b$) is the trace on a matrix ring. Other frequently used central linear functions include the augmentation map and the Kaplansky trace on group rings. Various generalizations of the above maps also have been explored in the literature on ring theory, module theory, and operator algebras.

One of the goals of this paper is to provide a general framework for studying a variety of central linear maps, or \emph{traces}. We begin by characterizing linear traces on (contracted) semigroup rings over commutative rings (Proposition~\ref{C-linear-trace}), which include both group rings and matrix rings, and hence unify the examples mentioned above. We then show that every (contracted) semigroup ring over a commutative ring admits a minimal linear trace (i.e., one whose kernel consists entirely of sums of commutators and is hence as small as possible), and characterize all minimal linear traces on such rings in terms of an equivalence relation on the underlying semigroups (Theorem~\ref{min-trace-exists}). This result is motivated by the well-known fact that the usual trace on a matrix ring over a commutative ring is minimal. We then apply our result to matrix rings (Corollary~\ref{usual-tr-min}), group rings (Corollary~\ref{gp-ring-tr}), and Cohn path algebras (Corollary~\ref{Cohn-min-tr}). Along the way we describe all central maps on various classes of semigroups, and in particular on graph inverse semigroups (Proposition~\ref{central-graph-maps}).

The remainder of the paper is devoted to linear traces on quotients of semigroup rings, especially on Leavitt path algebras. We describe when (minimal) linear traces on (contracted) semigroup rings over commutative rings pass to (minimal) linear traces on quotients thereof (Proposition~\ref{traces-on-quotients}). We then classify all linear traces on an arbitrary Leavitt path algebra in terms of central maps on the underlying graph inverse semigroup (Theorem~\ref{LPA-lin-tr}). 

Finally, we turn to faithful traces on Leavitt path algebras. (A trace that maps nonzero positive elements, with respect to the involution on the ring, to nonzero positive elements is said to be \emph{faithful}.) Our interest in faithful traces is motivated by the well-known fact that the usual trace on a matrix ring over a commutative ring with a positive definite involution is faithful. Moreover, Leavitt path algebras (first introduced in~\cite{AP,AMP}) are algebraic analogues of graph $C^*$-algebras, on which faithful traces have been studied extensively (e.g.,~\cite{PR, PRS}). Assuming that $K$ is a field with a positive definite involution and $E$ is a row-finite graph where every infinite path ends either in a sink or in a cycle, we show that the corresponding Leavitt path algebra $L_K(E)$ admits a faithful trace if and only if $E$ has no cycles with exits (Theorem~\ref{faithful-LPA}). We also give examples to illustrate the necessity of the hypotheses on $K$ and $E$ in this result. Other results about traces on Leavitt path algebras are obtained in~\cite{Vas}.

The necessary notions about semigroups, graphs, and Leavitt path algebras are reviewed along the way.

\subsection*{Acknowledgements} 

We are grateful to Gene Abrams and Gonzalo Aranda Pino for very helpful discussions about this subject, particularly regarding Corollary~\ref{LPA-matrix}. We would also like to thank the referee for a very thoughtful review.

\section{Traces on Semigroup Rings} \label{def-sect}

We begin by explaining our notation and describing the setting for our investigation of traces.

All rings will be assumed to be associative but not necessarily unital. Given a ring $R$ and elements $x,y \in R$, $[x,y]$ will denote the commutator $xy-yx$, and $[R,R]$ will denote the additive subgroup of $R$ generated by the commutators. 

\begin{definition} \label{trace-def}
Let $R$ and $T$ be rings. A $T$-\emph{valued trace on} $R$ is a map $t: R \to T$ satisfying $t(x+y)=t(x)+t(y)$ and $t(xy)=t(yx)$ for all $x,y \in R$.

If $R$ and $T$ are $C$-algebras, for some commutative ring $C$, then $t$ is $C$-\emph{linear} in case $t(cx)=ct(x)$ for all $x \in R$ and $c \in C$.

We say that $t$ is \emph{minimal} if for all $x \in R$, $t(x)=0$ implies that $x\in [R,R]$.
\end{definition}

Our usage of ``minimal" in the above definition is justified by the fact that $t(x)=0$ for any trace $t: R \to T$ and any $x \in [R,R]$.

Given a ring $R$ and a semigroup $G$ (with zero), we denote by $RG$ the corresponding semigroup ring, and by $\overline{RG}$ the corresponding \emph{contracted semigroup ring}, where the zero of $G$ is identified with the zero of $RG$. That is, $\overline{RG} = RG/I$, where $I = \{x\cdot 0_G \in RG \mid x \in R\}$ is the ideal of $RG$ generated by the zero $0_G$ of $G$. We note that if $G$ is a semigroup without zero, and $G^0$ is the semigroup obtained from $G$ by adjoining a zero element, then $RG \cong \overline{RG^0}$. 

For the remainder of this note, ``semigroup" will be understood to mean a semigroup with zero. We shall use the notation $\sum_g a_gg$ to denote an arbitrary element of a contracted semigroup ring $\overline{RG}$, where it is understood that $g$ ranges over all the nonzero elements of $G$, $a_g \in R$, and all but finitely many of the $a_g$ are zero. Finally, given a map $\delta: G \to R$ between semigroups, we shall say that $\delta$ \emph{preserves zero} if it takes zero to zero.

\begin{definition} \label{semigp-trace-def}
Let $C$ be a commutative ring, $R$ a $C$-algebra, $G$ a semigroup, and $\delta: G \to R$ a map that preserves zero. 

We denote by $t_\delta$ the map $\overline{CG} \to R$ defined by $t_\delta (\sum_g a_gg) = \sum_g a_g\delta(g)$, for all $a_g \in C$ and $g \in G$.

We say that $\delta$ is \emph{central} if $\delta(gh) = \delta(hg)$ for all $g,h \in G$, and that $\delta$ is \emph{normalized} if $R$ is unital and $\delta(g) \in \{0,1\}$ for all $g \in G$.
\end{definition}

With the help of the above definition we can describe all linear traces on (contracted) semigroup rings over commutative unital rings. This description, while very simple, will be useful throughout the paper.

\begin{proposition}\label{C-linear-trace}
Let $C$ be a commutative ring, $R$ a $C$-algebra, $G$ a semigroup, and $\delta : G \to R$ a central map that preserves zero. Then $t_\delta : \overline{CG} \to R$ is a $C$-linear trace. Moreover, if $C$ is unital, then every $C$-linear trace $t: \overline{CG} \to R$ is of this form.
\end{proposition}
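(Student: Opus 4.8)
The plan is to treat the two assertions separately: first that $t_\delta$ is always a $C$-linear trace, and then that every $C$-linear trace arises in this way when $C$ is unital. For the forward direction, I would begin by observing that $t_\delta$ is well defined, since every element of $\overline{CG}$ has a unique expression $\sum_g a_g g$ with $g$ ranging over the nonzero elements of $G$; additivity and $C$-linearity of $t_\delta$ are then immediate from this coordinate description, using the module axioms of $R$ over $C$ together with the commutativity of $C$. The only substantive point is the trace identity $t_\delta(xy) = t_\delta(yx)$.

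To verify the trace identity, I would use the bilinearity of multiplication in $\overline{CG}$ to reduce to monomials: writing $x = \sum_g a_g g$ and $y = \sum_h b_h h$, expanding $xy$ and $yx$, and applying additivity and $C$-linearity of $t_\delta$, it suffices to show $\sum_{g,h} a_g b_h \delta(gh) = \sum_{g,h} a_g b_h \delta(hg)$, with the convention $\delta(0_G) = 0$. Here I would use that $\delta$ preserves zero, so that whenever a product $gh$ collapses to $0_G$ in the contracted ring its contribution is $\delta(gh) = 0$, and that $C$ is commutative so that $b_h a_g = a_g b_h$. Centrality of $\delta$, namely $\delta(gh) = \delta(hg)$, then finishes this direction term by term.

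For the converse, assuming $C$ unital, I would define $\delta : G \to R$ by $\delta(g) = t(g)$ for each nonzero $g \in G$ (viewing $g = 1_C\, g \in \overline{CG}$) and $\delta(0_G) = 0$, so that $\delta$ preserves zero by construction. To see that $\delta$ is central, I would note that the natural inclusion $G \to \overline{CG}$ is multiplicative, whence $\delta(gh) = t(g \cdot h) = t(h \cdot g) = \delta(hg)$ by the trace property (products that vanish in $G$ map to $0$ and cause no trouble). Finally, to recover $t = t_\delta$, I would expand $t(\sum_g a_g g) = \sum_g t(a_g g) = \sum_g a_g\, t(g) = \sum_g a_g \delta(g) = t_\delta(\sum_g a_g g)$, using additivity and $C$-linearity of $t$.

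The step requiring the most care, and where the hypotheses genuinely enter, is the bookkeeping around the contracted structure and the role of unitality. In the forward direction one must confirm that the passage from $CG$ to $\overline{CG}$ is compatible with $t_\delta$ precisely because $\delta(0_G) = 0$, so that no ambiguity arises from products landing on the identified zero. In the converse, unitality of $C$ is exactly what is needed to write each $a_g g$ as $a_g \cdot (1_C g) = a_g \cdot g$ and pull the scalar out via $C$-linearity; without a unit one could not identify the semigroup element $g$ with a ring element, and the reconstruction of $t$ from its restriction to $G$ would break down.
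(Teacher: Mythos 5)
Your proposal is correct and follows essentially the same route as the paper's proof: expand on monomials and use the commutativity of $C$ together with the centrality of $\delta$ (with $\delta(0_G)=0$ absorbing products that collapse in the contracted ring) for the forward direction, and for the converse restrict $t$ to $G$, using unitality of $C$ to identify each nonzero $g$ with $1_C\, g$ and reconstruct $t$ by linearity. Your extra remarks on the contracted structure and the role of the unit simply make explicit what the paper treats implicitly; no gaps.
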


\begin{proof}
Let $x, y \in \overline{CG}$ be arbitrary elements, and write $x=\sum_g a_gg$, $y=\sum_g b_gg$ for some $a_g,b_g \in C$. Then $$t_\delta(x+y) = t_\delta \Big(\sum_g (a_g + b_g)g\Big) = \sum_g (a_g + b_g)\delta(g) = \sum_g a_g\delta(g) + \sum_g b_g\delta(g) = t_\delta(x) + t_\delta(y),$$ and, using the commutativity of $C$ and the fact that $\delta$ is central, we have  $$t_\delta(xy) = t_\delta \Big(\sum_g \sum_h a_gb_hgh \Big) = \sum_g \sum_h a_gb_h\delta(gh) = \sum_h \sum_g b_ha_g\delta(hg) = t_\delta (yx).$$ Thus $t_\delta$ is an $R$-valued trace, and it is clearly $C$-linear.

For the final claim, suppose that $C$ is unital and $t: \overline{CG} \to R$ is a $C$-linear trace. Let $x=\sum_g a_gg \in \overline{CG}$ be any element. Then $$t(x)=t\Big(\sum_g a_gg\Big)=\sum_g a_gt(g),$$ since $t$ is $C$-linear. (Here we identify each $g\in G \setminus \{0\}$ with $1 \cdot g \in \overline{CG}$.) Also, for any $g,h \in G$ we have $t(gh) = t(hg)$, since $t$ is a trace, and hence the restriction of $t$ to $G$ gives a central map, which necessarily preserves zero. Thus $t$ has the desired form.
\end{proof}

Here are some familiar examples of (normalized) traces of the above form.

\begin{example} \label{kap-eg}
Let $C$ be a commutative unital ring, $G$ a group, and $G^0$ the semigroup obtained by adjoining a zero to $G$. Define $\delta : G^0 \to C$ by $\delta (e) =1$, where $e \in G^0$ is the identity element, and $\delta(g)=0$ for all $g \in G^0\setminus \{e\}$. Then it is easy to see that $\delta$ is central. The map $t_\delta : CG \cong \overline{CG^0} \to C$ is known as the \emph{Kaplansky trace}.
\end{example}

\begin{example} \label{augmentation-eg}
Let $C$ be a commutative unital ring, $G$ a group, and $G^0$ the semigroup obtained by adjoining a zero to $G$. Define $\delta : G^0 \to C$ by $\delta (g) =1$ for all $g \in G^0 \setminus \{0\}$, and $\delta(0)=0$. Then $\delta$ is clearly central, and $t_\delta : CG \cong \overline{CG^0} \to C$ is called \emph{the augmentation map}. 
\end{example}

\begin{example} \label{matrix-example}
Let $C$ be a commutative unital ring and $n \geq 1$ an integer. Then the ring $\M_n(C)$ of $n\times n$ matrices over $C$ is isomorphic to the contracted semigroup ring $\overline{CG}$, where $G=\{e_{ij} \mid 1\leq i,j \leq n\} \cup \{0\}$, and $e_{ij}$ are the matrix units. Define $\delta : G \to C$ by $\delta(e_{ij}) = \delta_{ij}$, where $\delta_{ij}$ is the Kronecker delta, and $\delta(0)=0$. Then for all $1 \leq i,j,k,l \leq n$ we have $\delta(e_{ij}e_{kl}) =1$ if and only if $l=i$, $k=j$ if and only if $\delta(e_{kl}e_{ij}) = 1$. It follows that $\delta$ is a central map. It is now easy to see that $t_\delta : \M_n(C) \to C$ is precisely the usual trace on $\M_n(C)$.
\end{example} 

\begin{example} \label{inf-matrix-example}
Let $C$ be a commutative unital ring, $\kappa$ an infinite cardinal, and $\M_\kappa (C)$ the ring of infinite matrices over $C$, having rows and columns indexed by $\kappa$, with only finitely many nonzero entries. Then $\M_\kappa(C)$ is isomorphic to $\overline{CG}$, where $G=\{e_{ij} \mid i,j \in \kappa\} \cup \{0\}$, and $e_{ij}$ are the matrix units. Defining $\delta : G \to C$ as in Example~\ref{matrix-example} again gives a central map, and hence $t_\delta : \M_\kappa (C) \to C$ is a $C$-linear trace.
\end{example} 

\section{Minimal Traces} \label{min-section}

It is well known that the linear traces in Examples~\ref{matrix-example} and~\ref{inf-matrix-example} are minimal (see e.g.,~\cite[Corollary 17]{ZM1}). In this section we shall generalize this fact by constructing minimal linear traces for all semigroup rings over commutative unital rings and then classifying the minimal linear traces on such rings. The following relation will help us with this task.

\begin{definition} \label{sim-def}
Let $G$ be a semigroup and $g,h \in G$. We shall write $g \sim h$ if either $g=h$ or there are elements $a_1, b_1, a_2, b_2, \dots, a_n, b_n \in G$ such that $$g=a_1b_1, \ \ b_1a_1=a_2b_2, \ \ b_2a_2=a_3b_3, \dots, \ \ b_{n-1}a_{n-1}=a_nb_n, \ \ b_na_n=h.$$
\end{definition}

The relation $\sim$ is clearly an equivalence. We shall denote the $\sim$-equivalence class of $g \in G$ by $[g]$. We note that if $G\setminus \{0\}$ is a group, and $g,h \in G\setminus \{0\}$, then $g \sim h$ if and only if $g=fhf^{-1}$ for some $f \in G\setminus \{0\}$. Thus $\sim$ generalizes the notion of conjugacy from groups to all semigroups.

Before proceeding to our construction, let us give an example of a semigroup $G$ and elements $g,h \in G$ such that $g \sim h$, but where there do not exist $a, b \in G$ satisfying $g=ab$, $ba=h$. This will show that in general $\sim$ does not reduce to the simpler relation where $g$ and $h$ are related if $g=h$ or $g=ab$, $ba=h$ for some $a, b \in G$.

\begin{example}
Let $S=\{1, 2, 3, 4\}$, and let $G = \End (S)$ be the semigroup of all set maps from $S$ to itself. We define $c, d, e, f \in G$ as follows.
$$c: \xymatrix@=1pc{
1 \ar[r] & 1\\
2 \ar[ur] & 2\\
3 \ar[ur] & 3\\
4 \ar[r] & 4
} \ \ \ \ \ \ \ \ \ \ \ \ d: \xymatrix@=1pc{
1 \ar[r] & 1\\
2 \ar[r] & 2\\
3 \ar[ur] & 3\\
4 \ar[ur] & 4
}  \ \ \ \ \ \ \ \ \ \ \ \ e: \xymatrix@=1pc{
1 \ar[r] & 1\\
2 \ar[ur] & 2\\
3 \ar[ur] & 3\\
4 \ar[ur] & 4
} \ \ \ \ \ \ \ \ \ \ \ \ f: \xymatrix@=1pc{
1 \ar[r] & 1\\
2 \ar[ur] & 2\\
3 \ar[uur] & 3\\
4 \ar[ur] & 4
}$$ Then, letting elements of $G$ act on $S$ from the left, we have the following composites.
$$dc: \xymatrix@=1pc{
1 \ar[r] & 1\\
2 \ar[ur] & 2\\
3 \ar[ur] & 3\\
4 \ar[ur] & 4
} \ \ \ \ \ \ \ \ \ \ cd: \xymatrix@=1pc{
1 \ar[r] & 1\\
2 \ar[ur] & 2\\
3 \ar[uur] & 3\\
4 \ar[uur] & 4
}  \ \ \ \ \ \ \ \ \ \ fe: \xymatrix@=1pc{
1 \ar[r] & 1\\
2 \ar[ur] & 2\\
3 \ar[uur] & 3\\
4 \ar[uuur] & 4
} \ \ \ \ \ \ \ \ \ \ ef: \xymatrix@=1pc{
1 \ar[r] & 1\\
2 \ar[ur] & 2\\
3 \ar[uur] & 3\\
4 \ar[uur] & 4
}$$ Thus, $cd=ef$, and hence letting $g=dc$ and $h=fe$, we see that $g\sim h$. 

Clearly, for all $a,b \in G$ we have $|ab(S)| \leq |a(S)|, |b(S)|$ (where for any set $X$, $|X|$ denotes the cardinality of $X$). Hence, if $ab=g$, then $|a(S)|, |b(S)| \geq 3$. But then $|ba(S)| \geq 2$, and therefore $ba \neq h$. Thus, there do not exist $a, b \in G$ such that $g=ab$ and $ba=h$.
\end{example}

Extending a familiar notion from linear algebra, given a commutative ring $C$ and a $C$-algebra $R$, we say that a subset $S$ of $R$ is \emph{$C$-linearly independent} if for all $x_1, \dots, x_n \in S$ and all $c_1,\dots, c_n \in C$, $c_1x_1 + \dots + c_nx_n = 0$ implies that $c_1=\dots = c_n = 0$.

In the following lemma we give our minimal trace construction.

\begin{lemma} \label{min-trace-lemma}
Let $C$ be a commutative unital ring, $R$ a $C$-algebra, $G$ a semigroup, and $S$ the set of all nonzero $\sim$-equivalence classes of $G$. Suppose that $\, \{r_{[g]} \mid [g] \in S\} \subseteq R$ is a $C$-linearly independent set. Then defining $\delta : G \to R$ by $$\delta (g) = 
\left\{ \begin{array}{ll}
r_{[g]} & \text{if } \, g \not\sim 0\\
0 & \text{if } \, g \sim 0
\end{array}\right.,$$ gives a central map, and $t_\delta : \overline{CG} \to R$ is a minimal $C$-linear trace.
\end{lemma}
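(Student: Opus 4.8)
The plan is to verify the claims in three stages: that $\delta$ is central and preserves zero, that $t_\delta$ is consequently a $C$-linear trace, and finally that it is minimal.

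First I would establish the basic fact that $gh \sim hg$ for all $g,h \in G$. This is immediate from Definition~\ref{sim-def} with $n=1$, taking $a_1=g$ and $b_1=h$, so that $g h = a_1 b_1$ and $b_1 a_1 = hg$. Since $\sim$ is an equivalence relation this yields $[gh]=[hg]$, and in particular $gh \sim 0$ exactly when $hg \sim 0$. It follows at once that $\delta(gh)=\delta(hg)$ in both cases (namely $r_{[gh]}=r_{[hg]}$ when $gh \not\sim 0$, and $0=0$ otherwise), so $\delta$ is central; and since $0 \sim 0$, $\delta$ preserves zero. Proposition~\ref{C-linear-trace} then immediately gives that $t_\delta$ is a $C$-linear trace.

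The substance of the argument is minimality. The key fact I would isolate first is that $g \sim h$ implies $g-h \in [\overline{CG},\overline{CG}]$. Given a chain $g=a_1b_1$, $b_ia_i=a_{i+1}b_{i+1}$, $b_na_n=h$ witnessing $g \sim h$, I would write $g-h$ as the telescoping sum $\sum_{i=1}^n (a_ib_i - b_ia_i) = \sum_{i=1}^n [a_i,b_i]$, in which the intermediate terms cancel precisely because $b_ia_i = a_{i+1}b_{i+1}$. Taking $h=0$ shows in particular that $g \sim 0$ implies $g \in [\overline{CG},\overline{CG}]$. I would also note that $[\overline{CG},\overline{CG}]$ is a $C$-submodule, since $c[x,y]=[cx,y]$ as $c \in C$ is central; this is what lets me multiply these relations by arbitrary coefficients from $C$.

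With these in hand, I would take an arbitrary $x=\sum_g a_g g$ with $t_\delta(x)=0$ and reduce it modulo $[\overline{CG},\overline{CG}]$. Grouping the (finitely many nonzero) terms by $\sim$-class and replacing each $g$ by a fixed representative $g_{[g]}$ of its class, which is legitimate since $a_g(g - g_{[g]}) \in [\overline{CG},\overline{CG}]$, while discarding every term with $g \sim 0$, I obtain $x \equiv \sum_{[g] \in S} c_{[g]}\, g_{[g]} \pmod{[\overline{CG},\overline{CG}]}$, where $c_{[g]} = \sum_{h \in [g]} a_h$. Since $t_\delta$ vanishes on commutators, $0 = t_\delta(x) = \sum_{[g] \in S} c_{[g]}\, \delta(g_{[g]}) = \sum_{[g] \in S} c_{[g]}\, r_{[g]}$. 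The $C$-linear independence of $\{r_{[g]} \mid [g] \in S\}$ then forces every $c_{[g]}=0$, whence $x \equiv 0 \pmod{[\overline{CG},\overline{CG}]}$, i.e.\ $x \in [\overline{CG},\overline{CG}]$, as required. The main obstacle I anticipate is purely bookkeeping: making the grouping-by-class argument precise while tracking that only finitely many coefficients are nonzero, so that all sums are finite and the passage to representatives is unambiguous. The conceptual engine — that $t_\delta$ sees only the class-wise coefficient sums $c_{[g]}$, which are exactly the invariants surviving modulo commutators, and that linear independence pins them down — is straightforward once the telescoping identity is in place.
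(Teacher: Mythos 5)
Your proposal is correct and takes essentially the same route as the paper's proof: the telescoping identity showing that $g \sim h$ implies $g-h \in [\overline{CG},\overline{CG}]$, the separate treatment of the class of $0$, and the $C$-linear independence of the $r_{[g]}$ forcing the class-wise coefficient sums to vanish. The only (cosmetic) difference is that you reduce $x$ to a $C$-combination of class representatives modulo $[\overline{CG},\overline{CG}]$ before invoking independence, whereas the paper applies independence to $t_\delta(x)$ first and then rewrites each class-wise sum as $\sum_{h \in [g]\setminus\{f\}} c_h(h-f)$.
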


\begin{proof}
For any $g, h \in G$, we have $gh \sim hg$, and hence $\delta (gh) = r_{[gh]} = r_{[hg]} = \delta (hg)$. Thus $\delta$ is a central map that preserves zero, and by Proposition~\ref{C-linear-trace}, $t_\delta : \overline{CG} \to R$ is a $C$-linear trace. To show that it is minimal, first note that if $h\sim f$ for some distinct $h,f \in G$, then we can find $a_1, b_1, a_2, b_2, \dots, a_n, b_n \in G$ such that $$h=a_1b_1, \ \ b_1a_1=a_2b_2, \ \ b_2a_2=a_3b_3, \dots, \ \ b_{n-1}a_{n-1}=a_nb_n, \ \ b_na_n=f,$$ and therefore (using the fact that $C$ is unital) $$h-f = a_1b_1 - b_1a_1 + a_2b_2-b_2a_2 + \dots + a_nb_n - b_na_n \in [\overline{CG}, \overline{CG}].$$ In particular, this implies that for all $c_h \in C$, $$\sum_{h \in [0]} c_hh = \sum_{h \in [0]} c_h(h-0) \in [\overline{CG}, \overline{CG}].$$ 

To conclude the proof, let us suppose that $t_\delta (x)=0$ for some $x \in \overline{CG}$, and show that $x \in [\overline{CG}, \overline{CG}]$. We can write $x = \sum_{[g] \in S \cup \{[0]\}} \sum_{h \in [g]} c_hh$ for some $c_h \in C$. Since $$0=t_\delta (x) =  \sum_{[g] \in S \cup \{[0]\}} \sum_{h \in [g]} c_h \delta (h) = \sum_{[g] \in S} \sum_{h \in [g]} c_h r_{[g]} = \sum_{[g] \in S} \Big( \sum_{h \in [g]} c_h \Big) r_{[g]},$$ and since the $r_{[g]}$ are $C$-linearly independent, we see that $\sum_{h \in [g]} c_h = 0$ for all $[g] \in S$. It is enough to show that $\sum_{h \in [g]} c_hh \in [\overline{CG}, \overline{CG}]$ for each $[g] \in S \cup \{[0]\}$ such that $c_h \neq 0$ for some $h \in [g]$. Further, by the previous paragraph, we may assume that $[g] \neq [0]$. Thus, let $[g] \in S$ and assume that $c_f \neq 0$ for some $f \in [g]$. Then $c_f = -\sum_{h \in [g] \setminus \{f\}} c_h$, and hence $$\sum_{h \in [g]} c_hh = \sum_{h \in [g] \setminus \{f\}} c_hh - \Big( \sum_{h \in [g] \setminus \{f\}} c_h \Big) f = \sum_{h \in [g] \setminus \{f\}} c_h(h-f).$$ Since, as noted above, $h-f \in [\overline{CG}, \overline{CG}]$ for each $h \in [g] \setminus \{f\}$, we have $\sum_{h \in [g]} c_hh \in [\overline{CG}, \overline{CG}]$, as desired.
\end{proof}

Given a ring $C$ and a set $S$, we denote by $C^{(S)}$ the direct sum of $|S|$ copies of $C$, indexed by the elements of $S$. If $S= \emptyset$, then we understand $C^{(S)}$ to be the zero ring.

We are now ready for our main result about minimal traces.

\begin{theorem} \label{min-trace-exists}
Let $C$ be a commutative unital ring, $G$ a semigroup, and $S$ the set of all nonzero $\sim$-equivalence classes of $G$. Then the following hold.
\begin{enumerate}
\item[$(1)$] There is a minimal $C$-linear trace $t : \overline{CG} \to C^{(S)}$.
\item[$(2)$] If $R$ is a $C$-algebra, and $t: \overline{CG} \to R$ is a $C$-linear trace, then $t$ is minimal if and only if for all $g_1, \dots, g_n \in G$ such that $\, [g_1], \dots, [g_n] \in S$ are distinct, $t(g_1), \dots, t(g_n)$ are $C$-linearly independent in $R$.
\item[$(3)$] There is a normalized minimal $C$-linear trace $t : \overline{CG} \to R$ for some unital $C$-algebra $R$ if and only if $\, |S| \leq 1$.
\end{enumerate}
\end{theorem}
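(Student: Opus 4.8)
The plan is to obtain parts (1) and (2) directly from Lemma~\ref{min-trace-lemma} and Proposition~\ref{C-linear-trace}, and then to derive part (3) from part (2) together with the normalization constraint. For part (1), I would apply Lemma~\ref{min-trace-lemma} with $R = C^{(S)}$ and with $\{r_{[g]} \mid [g] \in S\}$ taken to be the standard basis of $C^{(S)}$, so that $r_{[g]} = e_{[g]}$ is the element with $1$ in coordinate $[g]$ and $0$ elsewhere. Since $C$ is unital this set is $C$-linearly independent, so the lemma produces a minimal $C$-linear trace $t_0 := t_\delta : \overline{CG} \to C^{(S)}$ satisfying $t_0(g) = e_{[g]}$ for $g \not\sim 0$ and $t_0(g) = 0$ for $g \sim 0$. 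This $t_0$ will double as a reference trace below.

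For part (2), Proposition~\ref{C-linear-trace} lets me write any $C$-linear trace as $t = t_\delta$ with $\delta = t|_G$ central; chaining $t(uv) = t(vu)$ along the relations defining $\sim$ shows that $\delta$ is constant on $\sim$-classes and that $t(h) = t(0) = 0$ whenever $h \sim 0$. For the ``if'' direction, the hypothesis says precisely that the values $r_{[g]} := t(g)$, $[g] \in S$, are $C$-linearly independent, so Lemma~\ref{min-trace-lemma} applies verbatim and $t$ is minimal. For the ``only if'' direction, assume $t$ is minimal and $\sum_i c_i t(g_i) = 0$ for distinct $[g_1], \dots, [g_n] \in S$. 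Then $x := \sum_i c_i g_i \in \ker t \subseteq [\overline{CG}, \overline{CG}]$ by minimality, and applying the reference trace $t_0$ (which annihilates all commutators) gives $\sum_i c_i e_{[g_i]} = 0$; as the $e_{[g_i]}$ are distinct basis vectors of $C^{(S)}$, all $c_i = 0$.

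For part (3), the ``if'' direction is immediate from part (1): when $|S| \leq 1$ the basis of $C^{(S)}$ is either empty or the single element $1 \in C^{(S)} = C$, so $t_0$ already takes values in $\{0,1\}$ and is a normalized minimal trace into the unital $C$-algebra $C^{(S)}$. For the converse, suppose $t : \overline{CG} \to R$ is normalized and minimal but $|S| \geq 2$, and choose distinct $[g_1], [g_2] \in S$. Part (2) forces $t(g_1)$ and $t(g_2)$ to be $C$-linearly independent, yet normalization forces both to lie in $\{0,1\} \subseteq R$; assuming $1 \neq 0$, any two elements of $\{0,1\}$ are $C$-linearly dependent (if either is $0$ the pair is dependent, and if both equal $1$ then $1 \cdot t(g_1) - 1 \cdot t(g_2) = 0$), a contradiction. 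Hence $|S| \leq 1$.

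I expect the ``only if'' direction of part (2) to be the crux, since it is where the abstract minimality of $t$ must be converted into a concrete linear-independence statement; the idea that makes it work is that the reference trace $t_0$ records exactly the sum of coefficients over each $\sim$-class and kills $[\overline{CG}, \overline{CG}]$, so that membership in $[\overline{CG}, \overline{CG}]$ is precisely the vanishing of these class-sums. I would also flag the degenerate case $C = 0$ (where $\overline{CG}$ is the zero ring and $1 = 0$), which should be set aside in part (3), as the argument above relies on $1 \neq 0$.
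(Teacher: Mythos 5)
Your proposal is correct and takes essentially the same route as the paper: part (1) is the paper's construction verbatim (Lemma~\ref{min-trace-lemma} with the standard basis of $C^{(S)}$), part (2) uses Proposition~\ref{C-linear-trace} plus the lemma in one direction and the reference trace into $C^{(S)}$ to detect non-membership in $[\overline{CG},\overline{CG}]$ in the other (the paper organizes this contrapositively, you directly --- the same argument), and part (3) is the same normalization-versus-linear-independence clash. Your flagging of the degenerate case $C=0$, where $1=0$ undermines the dependence argument in (3), is a legitimate implicit hypothesis that the paper's own proof glosses over.
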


\begin{proof}
To show (1), define $\delta : G \to C^{(S)}$ by $$\delta (g) = 
\left\{ \begin{array}{ll}
e_{[g]} & \text{if } \, g \not\sim 0\\
0 & \text{if } \, g \sim 0
\end{array}\right.,$$ where $e_{[g]} \in C^{(S)}$ denotes the element with $1$ as the entry in the coordinate indexed by $\, [g]$ and zeros elsewhere. The set $\{e_{[g]} \mid [g] \in S\}$ is clearly $C$-linearly independent, and hence, by Lemma~\ref{min-trace-lemma}, $t_\delta : \overline{CG} \to C^{(S)}$ is a minimal $C$-linear trace. 

For (2), let $t: \overline{CG} \to R$ be a $C$-linear trace. Suppose that $g_1, \dots, g_n \in G$ are such that $[g_1], \dots, [g_n] \in S$ are distinct, but $c_1t(g_1)+ \dots +c_nt(g_n) = 0$ for some $c_1, \dots, c_n \in C$, not all of which are zero. Then $t(c_1g_1 + \dots + c_ng_n) = 0$, but $$t_\delta (c_1g_1 + \dots + c_ng_n) = c_1e_{[g_1]} + \dots + c_ne_{[g_n]} \neq 0$$ (where $t_\delta$ is the map from the previous paragraph), and hence $c_1g_1 + \dots + c_ng_n \notin [\overline{CG}, \overline{CG}]$, showing that $t$ is not minimal. 

Conversely, suppose that for all $g_1, \dots, g_n \in G$ such that $[g_1], \dots, [g_n] \in S$ are distinct, $t(g_1), \dots, t(g_n)$ are $C$-linearly independent in $R$. By Proposition~\ref{C-linear-trace}, we have $t=t_\delta$, where $\delta : G \to R$ is the restriction of $t$ to $G$. Set $r_{[g]} : = \delta ([g])$ for all $g \in G$ such that $g \not\sim 0$. (Since $\delta$ is central, it must agree on all elements of a $\sim$-equivalence class.) Then $\{r_{[g]} \mid [g] \in S\}$ is a $C$-linearly independent set, by hypothesis, and therefore it follows from Lemma~\ref{min-trace-lemma} that $t$ is minimal.

For (3), we note that if $t : \overline{CG} \to R$ is a minimal $C$-linear trace that is normalized on $G$, then $|t(G)| \leq 2$. Hence, by (2), there can be at  most one nonzero $\sim$-equivalence class in $G$ (i.e., $|S| \leq 1$). Conversely, if $|S| \leq 1$, then defining $\delta : G \to C$ by $$\delta (g) = 
\left\{ \begin{array}{ll}
1 & \text{if } \, g \not\sim 0\\
0 & \text{if } \, g \sim 0
\end{array}\right.$$
gives a minimal $C$-linear trace $t_\delta : \overline{CG} \to C$, by Lemma~\ref{min-trace-lemma}, which is clearly normalized.
\end{proof}

Let us next give several consequences of the above theorem, along with related observations.

\begin{corollary} \label{commutator-rings}
Let $C$ be a commutative unital ring, $R$ a nonzero $C$-algebra, and $G$ a semigroup. Then the following are equivalent.
\begin{enumerate}
\item[$(1)$] The only central map $\delta: G \to R$ that preserves zero is the zero map.
\item[$(2)$] For every $f \in G$ we have $f\sim 0$.
\item[$(3)$] $\overline{CG} = [\overline{CG}, \overline{CG}]$.
\item[$(4)$] The only trace $t: \overline{CG} \to R$ is the zero map.
\item[$(5)$] The only $C$-linear trace $t: \overline{CG} \to R$ is the zero map.
\end{enumerate}
\end{corollary}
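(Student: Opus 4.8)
The plan is to prove this chain of equivalences by establishing a cycle of implications, relying heavily on the results already proved in this section. The structure $(1) \Rightarrow (2) \Rightarrow (3) \Rightarrow (4) \Rightarrow (5) \Rightarrow (1)$ seems natural, since some of these implications are nearly immediate. In particular, $(4) \Rightarrow (5)$ is trivial (every $C$-linear trace is in particular a trace), and $(5) \Rightarrow (1)$ follows directly from Proposition~\ref{C-linear-trace}: if $\delta$ is a nonzero central map preserving zero, then $t_\delta$ is a nonzero $C$-linear trace, so the contrapositive gives what we want (here I would use that $R$ is nonzero and the $r_{[g]}$ can be chosen nontrivially, but actually since $\delta$ itself is assumed nonzero, $t_\delta(g) = \delta(g) \neq 0$ for some $g$, making $t_\delta$ nonzero directly).

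The heart of the argument lies in the implications connecting conditions $(1)$, $(2)$, and $(3)$, which translate the semigroup-theoretic condition ``$f \sim 0$ for all $f$'' into both the algebraic statement about commutators and the map-theoretic statement about central maps. For $(2) \Rightarrow (3)$, I would invoke the key computation from the proof of Lemma~\ref{min-trace-lemma}: whenever $h \sim f$ with $h, f \in G$, one has $h - f \in [\overline{CG}, \overline{CG}]$ (this used that $C$ is unital, which we have). Taking $f = 0$ and using $h \sim 0$ for every $h$, we get $h = h - 0 \in [\overline{CG}, \overline{CG}]$ for every nonzero $h \in G$; since these elements $C$-linearly span $\overline{CG}$ and $[\overline{CG}, \overline{CG}]$ is an additive subgroup closed under the relevant scalar combinations, we conclude $\overline{CG} = [\overline{CG}, \overline{CG}]$. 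For $(3) \Rightarrow (4)$, if $\overline{CG} = [\overline{CG}, \overline{CG}]$ then any trace $t$ must vanish identically, since every trace kills commutators (the remark following Definition~\ref{trace-def}).

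It remains to handle $(1) \Rightarrow (2)$, which I expect to be the main obstacle, since it requires ruling out any nonzero central map rather than merely exhibiting one. I would argue the contrapositive: suppose $(2)$ fails, so there exists $f \in G$ with $f \not\sim 0$, equivalently $S \neq \emptyset$. Then by Theorem~\ref{min-trace-exists}(1) (or directly by Lemma~\ref{min-trace-lemma} applied with a single nonzero independent element $r_{[f]} \neq 0$ in the nonzero $C$-algebra $R$), there is a nonzero central map $\delta: G \to R$ that preserves zero, namely one sending the class $[f]$ to a nonzero element of $R$ and everything $\sim$-equivalent to $0$ to $0$. This contradicts $(1)$. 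The one technical point to verify here is that a nonzero element of $R$ exists and that a singleton $\{r_{[f]}\}$ with $r_{[f]} \neq 0$ is $C$-linearly independent; this requires $R \neq 0$, which is exactly the standing hypothesis, though I should note that $C$-linear independence of a singleton in general needs $c \cdot r_{[f]} = 0 \Rightarrow c = 0$, so I would instead appeal to part (1) of Theorem~\ref{min-trace-exists} to get the canonical trace into $C^{(S)}$ with $S \neq \emptyset$, whose restriction to $G$ is a nonzero central map, and compose with any nonzero $C$-algebra map $C^{(S)} \to R$ (or simply observe $C^{(S)} \neq 0$ suffices if we allow $R = C^{(S)}$, but since $R$ is arbitrary nonzero I would produce the map more carefully). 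With this cycle complete, all five conditions are equivalent.
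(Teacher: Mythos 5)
Your cycle $(1)\Rightarrow(2)\Rightarrow(3)\Rightarrow(4)\Rightarrow(5)\Rightarrow(1)$ is the same as the paper's, and the implications $(2)\Rightarrow(3)\Rightarrow(4)\Rightarrow(5)\Rightarrow(1)$ are all correct: for $(2)\Rightarrow(3)$ you unwind the computation from the proof of Lemma~\ref{min-trace-lemma} directly (each $h\sim 0$ gives $h=h-0\in[\overline{CG},\overline{CG}]$, and commutators absorb $C$-scalars since $c(xy-yx)=(cx)y-y(cx)$), whereas the paper routes through Theorem~\ref{min-trace-exists}(1) with $S=\emptyset$ and target the zero ring; both work. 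The genuine gap is in $(1)\Rightarrow(2)$, where neither of your two routes closes. Route one (Lemma~\ref{min-trace-lemma} with the singleton $\{r_{[f]}\}$) fails for exactly the reason you flag: a nonzero element of an arbitrary nonzero $C$-algebra $R$ need not be $C$-linearly independent, since it may be annihilated by a nonzero scalar. Route two---take the canonical minimal trace into $C^{(S)}$ and compose with a nonzero $C$-algebra map $C^{(S)}\to R$---fails because such a map need not exist: rings here are not assumed unital, so take $C=\Z$ and $R$ a nonzero ring with zero multiplication; any algebra homomorphism $C^{(S)}\to R$ sends the idempotents $e_{[g]}$ to idempotents of $R$, hence to $0$, and since the $e_{[g]}$ span $C^{(S)}$, the homomorphism is zero. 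Your closing remark that you would ``produce the map more carefully'' leaves precisely this unresolved, and $(1)$ quantifies over central maps into the \emph{given} $R$, so producing one into $C^{(S)}$ is not enough.

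The fix is simpler than either route and is what the paper does: pick any $r\in R\setminus\{0\}$ and define $\delta:G\to R$ by $\delta(g)=r$ if $g\not\sim 0$ and $\delta(g)=0$ if $g\sim 0$. Centrality needs no linear independence and no trace machinery: since $gh\sim hg$ always, one has $gh\sim 0$ if and only if $hg\sim 0$, so $\delta(gh)$ and $\delta(hg)$ are either both $r$ or both $0$. The independence hypothesis in Lemma~\ref{min-trace-lemma} is needed only for \emph{minimality} of $t_\delta$, not for \emph{centrality} of $\delta$, and centrality is all that $(1)\Rightarrow(2)$ requires. (Alternatively, your $C^{(S)}$ map does work if you compose with an arbitrary zero-preserving \emph{set} map $C^{(S)}\to R$ sending each $e_{[g]}$ to $r$: centrality of the composite follows pointwise from centrality of $\delta$, with no homomorphism property needed---but this just reproduces the direct construction.) With this substitution your argument is complete.
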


\begin{proof}
$(1) \Rightarrow (2)$ Let $X \subseteq G$ consist of all the elements $f \in G$ such that $f \not\sim 0$, and let $r \in R\setminus \{0\}$. If $X \neq \emptyset$, then $$\delta (f) = 
\left\{ \begin{array}{ll}
r & \text{if } f \in X\\
0 & \text{if } f \notin X
\end{array}\right.$$
defines a nonzero map $\delta: G \to R$ that preserves zero. Moreover, $\delta$ is central, since for all $g,h \in G$ we have $gh \sim hg$, and hence $$\delta (gh) = 0 \iff gh \sim 0 \iff hg \sim 0 \iff \delta (hg) = 0.$$ Thus, if $(1)$ holds, then $X=\emptyset$, and therefore $(2)$ must also hold. 

$(2) \Rightarrow (3)$ If for every $f \in G$ we have $f\sim 0$, then by Theorem~\ref{min-trace-exists}(1), there is a minimal trace from $\overline{CG}$ to the zero ring, which can only happen if $\overline{CG} = [\overline{CG}, \overline{CG}]$.

$(3) \Rightarrow (4)$ Clearly, if every element of $\overline{CG}$ is a sum of commutators, then any trace on $\overline{CG}$ must send every element to zero.

$(4) \Rightarrow (5)$ This is a tautology.

$(5) \Rightarrow (1)$ This follows from Proposition~\ref{C-linear-trace}.
\end{proof}

In the following example we construct a nonzero semigroup satisfying statement (2) of Corollary~\ref{commutator-rings}. We obtain it by modifying the semigroups $G$ in Examples~\ref{matrix-example} and~\ref{inf-matrix-example}.

\begin{example}
Let $G=\{e_{ij} \mid i,j \in \R, i<j\} \cup \{0\}$, where $\R$ is the set of the real numbers (though, any dense well-ordered set would do in place of $\R$). Define multiplication on $G$ by 
$$e_{ij}\cdot e_{kl} = 
\left\{ \begin{array}{ll}
e_{il} & \text{if } j = k\\
0 & \text{if } j \neq k
\end{array}\right.$$ 
and $0\cdot e_{ij} = 0 = e_{ij}\cdot 0$ for all $i,j,k,l \in \R$. It is easy to see that this operation is associative, and hence that $(G, \cdot)$ is a semigroup. Now, let $e_{ij} \in G \setminus \{0\}$ be any element, and let $k \in \R$ be any number such that $i < k < j$. Then $e_{ij} = e_{ik}e_{kj}$, but $e_{kj}e_{ik} = 0$. Hence $e_{ij} \sim 0$, implying that all $f \in G$ satisfy $f \sim 0$.
\end{example}

By Corollary~\ref{commutator-rings}, if $C$ is any commutative unital ring and $G$ is the semigroup constructed above, then $\overline{CG} = [\overline{CG}, \overline{CG}]$. For other examples of rings $R$ satisfying $R = [R,R]$ see~\cite{ZM1, ZM2}.

Using Theorem~\ref{min-trace-exists} we can strengthen the well-known fact that the usual trace on a matrix ring is minimal. See Example~\ref{inf-matrix-example} for the definition of $\M_\kappa (C)$, when $\kappa$ is infinite.

\begin{corollary} \label{usual-tr-min}
Let $C$ be a commutative unital ring, $\kappa$ a nonzero cardinal, and $\, \tr : \M_\kappa (C) \to C$ the usual trace. Then a $C$-linear map $t : \M_\kappa (C) \to C$ is a trace if and only if there exists $c \in C$ such that $t(M) = c \cdot \tr (M)$ for all $M \in \M_\kappa (C)$. In this case, $t$ is minimal if and only if $c$ is not a zero divisor.
\end{corollary}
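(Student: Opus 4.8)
The plan is to realize $\M_\kappa(C)$ as the contracted semigroup ring $\overline{CG}$ with $G=\{e_{ij}\mid i,j\in\kappa\}\cup\{0\}$, exactly as in Examples~\ref{matrix-example} and~\ref{inf-matrix-example} (which together cover all nonzero cardinals), and then to read off everything from the $\sim$-equivalence classes of $G$ together with Theorem~\ref{min-trace-exists}. So the first and most important step is to compute these classes explicitly. I claim there are exactly two: the class $[0]$, which contains $0$ together with every off-diagonal unit $e_{ij}$ (with $i\neq j$), and a single diagonal class containing all the $e_{ii}$. The inclusion of the off-diagonal units in $[0]$ is immediate from $e_{ij}=e_{ij}e_{jj}$ together with $e_{jj}e_{ij}=0$ (since $i\neq j$), while all diagonal units are mutually equivalent because $e_{ii}=e_{ij}e_{ji}$ and $e_{ji}e_{ij}=e_{jj}$.

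The subtle point, and the one I expect to be the main obstacle, is showing that no diagonal unit is $\sim$-equivalent to $0$, so that the diagonal class is genuinely a nonzero class. The key is an invariant: if $ab=e_{ii}\neq 0$ for some $a,b\in G$, then both factors must be nonzero, forcing $a=e_{iq}$ and $b=e_{qi}$ for some $q$, whence $ba=e_{qq}$ is again a nonzero diagonal unit. Feeding this into the defining chain for $\sim$, an induction shows that if $e_{ii}=a_1b_1$, $b_1a_1=a_2b_2,\dots$, then every intermediate product $b_ka_k$ is a nonzero diagonal unit, so the chain can never terminate at $0$. Hence $e_{ii}\not\sim 0$, and the set $S$ of nonzero $\sim$-classes satisfies $|S|=1$.

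With the classes in hand, the first equivalence follows mechanically. By Proposition~\ref{C-linear-trace}, every $C$-linear trace $t:\overline{CG}\to C$ has the form $t_\delta$ for a central $\delta:G\to C$ that preserves zero; since a central map is constant on $\sim$-classes, $\delta$ vanishes on $[0]$ and takes a single value $c:=\delta(e_{ii})$ on the diagonal class, giving $t(M)=\sum_i M_{ii}\,\delta(e_{ii})=c\cdot\tr(M)$. Conversely, $c\cdot\tr=t_{c\delta_0}$, where $\delta_0$ is the Kronecker-delta map of Example~\ref{matrix-example}; being a scalar multiple into the commutative ring $C$ of a central map, $c\delta_0$ is again central and zero-preserving, so $c\cdot\tr$ is indeed a $C$-linear trace.

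For the minimality claim I would invoke Theorem~\ref{min-trace-exists}(2). Because $|S|=1$, the only families $g_1,\dots,g_n\in G$ with distinct nonzero classes are singletons $\{g\}$ with $g$ diagonal, so the linear-independence criterion collapses to the requirement that the single element $t(e_{ii})=c$ be $C$-linearly independent, i.e.\ that $\lambda c=0$ imply $\lambda=0$ for all $\lambda\in C$. This is precisely the statement that $c$ is not a zero divisor, completing the proof. (Note that $c=0$ fails this test, since $1\cdot 0=0$, consistent with counting $0$ as a zero divisor.)
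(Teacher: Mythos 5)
Your proposal is correct and takes essentially the same route as the paper's proof: identify $\M_\kappa(C)$ with $\overline{CG}$ for $G=\{e_{ij}\}\cup\{0\}$, show via the computations $e_{ij}=e_{ij}e_{jj}$, $e_{jj}e_{ij}=0$ and $e_{ii}=e_{ij}e_{ji}$, $e_{ji}e_{ij}=e_{jj}$ that the unique nonzero $\sim$-class is $\{e_{ii}\mid i\in\kappa\}$, and then apply Proposition~\ref{C-linear-trace} for the classification and Theorem~\ref{min-trace-exists}(2) for minimality. Your only addition is the explicit invariant argument that $e_{ii}\not\sim 0$ (every product chain starting from a nonzero diagonal unit stays among nonzero diagonal units), a point the paper leaves implicit; this is a worthwhile piece of care rather than a different approach.
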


\begin{proof}
Let $G=\{e_{ij} \mid i,j \in \kappa\} \cup \{0\}$, where $e_{ij}$ are the matrix units. Then $\M_\kappa(C)$ is isomorphic to the contracted semigroup ring $\overline{CG}$, as noted in Examples~\ref{matrix-example} and~\ref{inf-matrix-example}. Letting $i,j \in \kappa$, we see that if $i \neq j$, then $e_{ij} = e_{ij}e_{jj}$ and $e_{jj}e_{ij}=0$, while $e_{ii}=e_{ij}e_{ji}$ and $e_{ji}e_{ij}=e_{jj}$. It follows that $G$ has only one nonzero $\sim$-equivalence class, namely $\{e_{ii} \mid i \in \kappa\}$. Thus, for any central map $\delta : G \to C$ that preserves zero there must be some $c \in C$ such that
$$\delta (e_{ij}) = 
\left\{ \begin{array}{ll}
c & \text{if } i = j\\
0 & \text{if } i \neq j
\end{array}\right..$$
Noting that for such $\delta$ we have $t_\delta = c \cdot \tr$, the first claim now follows from Proposition~\ref{C-linear-trace}. 

The second claim follows from Theorem~\ref{min-trace-exists}(2), since $\{t (e_{ii})\} = \{c\}$ is $C$-linearly independent if and only if $ac \neq 0$ for all $a \in C \setminus \{0\}$, i.e., $c$ is not a zero divisor.
\end{proof}

Next, let us turn to traces on group rings. By Theorem~\ref{min-trace-exists}, for any group $G$ and commutative unital ring $C$, $CG$ admits a minimal $C$-linear trace. However, the usual traces on $CG$ (see Examples~\ref{kap-eg} and~\ref{augmentation-eg}) do not have this property, as the next result shows.

\begin{corollary} \label{gp-ring-tr}
Let $C$ be a commutative unital ring and $G$ a nontrivial group. Then there are no minimal $C$-linear traces $t: CG \to C$.
\end{corollary}

\begin{proof}
As noted in Section~\ref{def-sect}, we may identify $CG$ with $\overline{CG^0}$, where $G^0$ is the semigroup obtained from $G$ by adjoining a zero element. By Theorem~\ref{min-trace-exists}(2), if there is a minimal $C$-linear trace $t:  \overline{CG^0} \to C$, then there must be only one $\sim$-equivalence class of elements of $G$. But, the $\sim$-equivalence class of the identity $e$ consists of just one element, and hence this can happen only if $G=\{e\}$.
\end{proof}

We conclude this section by noting that in certain situations our theory of minimal traces on semigroup rings extends to quotients of such rings.

\begin{proposition} \label{traces-on-quotients}
Let $C$ be a commutative ring, $R$ a $C$-algebra, $G$ a semigroup, $\delta : G \to R$ a central map that preserves zero, and $I \subseteq \overline{CG}$ an ideal. If $t_\delta (I) = 0$, then $\bar{t}_\delta (r+I) = t_\delta (r)$ defines a $C$-linear trace $\bar{t}_\delta : \overline{CG}/I \to R$. Moreover, $t_\delta$ is minimal if and only if $\bar{t}_\delta$ is minimal and $I \subseteq [\overline{CG}, \overline{CG}]$.
\end{proposition}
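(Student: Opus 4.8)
The plan is to verify the well-definedness and trace property of $\bar{t}_\delta$ first, and then establish the ``moreover'' equivalence as a biconditional by proving each direction separately. For the first claim, I would observe that $\bar{t}_\delta$ is defined on cosets by a representative, so I must check it does not depend on the choice of representative: if $r + I = r' + I$, then $r - r' \in I$, whence $t_\delta(r - r') = 0$ by the hypothesis $t_\delta(I) = 0$, and additivity of $t_\delta$ (which holds since it is a trace by Proposition~\ref{C-linear-trace}) gives $t_\delta(r) = t_\delta(r')$. Then additivity, the trace identity $\bar{t}_\delta((r+I)(s+I)) = \bar{t}_\delta(rs + I) = t_\delta(rs) = t_\delta(sr) = \bar{t}_\delta((s+I)(r+I))$, and $C$-linearity all descend immediately from the corresponding properties of $t_\delta$. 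This part is entirely routine.

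For the ``moreover'' statement, I would prove both implications. Assume first that $t_\delta$ is minimal. To get $I \subseteq [\overline{CG}, \overline{CG}]$, take any $x \in I$; then $t_\delta(x) = 0$ by hypothesis, so minimality of $t_\delta$ forces $x \in [\overline{CG}, \overline{CG}]$. To show $\bar{t}_\delta$ is minimal, suppose $\bar{t}_\delta(r + I) = 0$ for some $r \in \overline{CG}$; then $t_\delta(r) = 0$, so $r \in [\overline{CG}, \overline{CG}]$ by minimality of $t_\delta$, and applying the quotient map (a ring homomorphism, which sends commutators to commutators) yields $r + I \in [\overline{CG}/I, \overline{CG}/I]$, as needed. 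Conversely, assume $\bar{t}_\delta$ is minimal and $I \subseteq [\overline{CG}, \overline{CG}]$. Suppose $t_\delta(x) = 0$ for some $x \in \overline{CG}$; then $\bar{t}_\delta(x + I) = 0$, so minimality of $\bar{t}_\delta$ gives $x + I \in [\overline{CG}/I, \overline{CG}/I]$, i.e., $x + I = \sum_k [r_k + I, s_k + I] = \big(\sum_k [r_k, s_k]\big) + I$ for finitely many $r_k, s_k$. Hence $x - \sum_k [r_k, s_k] \in I \subseteq [\overline{CG}, \overline{CG}]$, and since $\sum_k [r_k, s_k] \in [\overline{CG}, \overline{CG}]$ as well, it follows that $x \in [\overline{CG}, \overline{CG}]$, establishing minimality of $t_\delta$.

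The main obstacle, to the extent there is one, is the converse direction of the equivalence: this is precisely where the hypothesis $I \subseteq [\overline{CG}, \overline{CG}]$ is indispensable, and the crux is the observation that the preimage under the quotient map of $[\overline{CG}/I, \overline{CG}/I]$ is exactly $[\overline{CG}, \overline{CG}] + I$, which collapses to $[\overline{CG}, \overline{CG}]$ precisely when $I$ is contained in the commutator subgroup. I would make sure to state cleanly that a quotient map carries $[\overline{CG}, \overline{CG}]$ onto $[\overline{CG}/I, \overline{CG}/I]$ (surjectively), since both inclusions of the preimage computation rest on this; the forward inclusion handles the minimality transfer of $\bar{t}_\delta$, while the reverse inclusion, combined with $I \subseteq [\overline{CG}, \overline{CG}]$, drives the converse. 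Everything else is a direct transcription of the minimality definition through the canonical surjection.
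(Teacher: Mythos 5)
Your proposal is correct and takes essentially the same route as the paper: well-definedness and the trace identity descend directly from $t_\delta(I)=0$, and minimality transfers in both directions exactly as in the paper's proof. The only difference is that you make explicit the lifting step $x+I=\bigl(\sum_k [r_k,s_k]\bigr)+I$, hence $x-\sum_k[r_k,s_k]\in I\subseteq[\overline{CG},\overline{CG}]$, which the paper leaves implicit in the phrase ``Since $I \subseteq [\overline{CG}, \overline{CG}]$, this implies that $x \in [\overline{CG},\overline{CG}]$.''
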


\begin{proof}
Since $t_\delta (I) = 0$, $\bar{t}_\delta$ is well-defined, and it is routine to verify that $\bar{t}_\delta$ is a $C$-linear trace. For instance, for all $x,y \in \overline{CG}$ we have $$\bar{t}_\delta ((x+I)(y+I))=\bar{t}_\delta(xy+I)=t_\delta (xy) = t_\delta (yx) = \bar{t}_\delta (yx+I) = \bar{t}_\delta ((y+I)(x+I)).$$

Now, suppose that $t_\delta$ is minimal. Since $t_\delta (I) = 0$, we must have $I \subseteq [\overline{CG}, \overline{CG}]$. Also, if $\bar{t}_\delta (x+I)=0$ for some $x \in \overline{CG}$, then $t_\delta (x) =0$, and hence $x \in [\overline{CG},\overline{CG}]$. Thus, $x+I \in [\overline{CG}/I, \overline{CG}/I]$, implying that $\bar{t}_\delta$ is minimal. Conversely, suppose that $\bar{t}_\delta$ is minimal and $I \subseteq [\overline{CG}, \overline{CG}]$. If $t_\delta (x) =0$ for some $x \in \overline{CG}$, then $\bar{t}_\delta (x+I)=0$, and hence $x+I \in [\overline{CG}/I, \overline{CG}/I]$. Since $I \subseteq [\overline{CG}, \overline{CG}]$, this implies that $x \in [\overline{CG},\overline{CG}]$. Therefore, $t_\delta$ is minimal.
\end{proof}

\section{Graph Inverse Semigroups}\label{semigp-section}

In this section we describe the central maps and $\sim$-equivalence classes in certain semigroups arising from graphs. In addition to supplying further examples to which Proposition~\ref{C-linear-trace} and Theorem~\ref{min-trace-exists} can be applied, this will help us study traces on Leavitt path algebras in subsequent sections. We begin by recalling some notions pertaining to semigroups and graphs.

A semigroup $S$ is an \emph{inverse semigroup} if for each $x \in S$ there is a unique element $x^{-1} \in S$, called the \emph{semigroup inverse} of $x$, satisfying $x = xx^{-1}x$ and $x^{-1} = x^{-1}xx^{-1}$.

A \emph{directed graph} $E=(E^0,E^1,\so, \ra)$ consists of two  sets $E^0,E^1$ (the elements of which are called \emph{vertices} and \emph{edges}, respectively), together with functions $\so,\ra:E^1 \to E^0$, called \emph{source} and \emph{range}, respectively. We shall refer to directed graphs as simply ``graphs" from now on. A \emph{path} $p$ in $E$ is a finite sequence $e_1\cdots e_n$ of (not necessarily distinct) edges $e_1,\dots, e_n \in E^1$ such that $\ra(e_i)=\so(e_{i+1})$ for $i\in \{1,\dots,n-1\}$. Here we define $\so(p):=\so(e_1)$ to be the \emph{source} of $p$, $\ra(p):=\ra(e_n)$ to be the \emph{range} of $p$, and $n$ to be the \emph{length} of $p$. We view the elements of $E^0$ as paths of length $0$, and denote by $\path(E)$ the set of all paths in $E$. A path $p = e_1\cdots e_n$ is said to be \emph{closed} if $\so(p)=\ra(p)$. Such a path is said to be a \emph{cycle} if in addition $\so(e_i)\neq \so(e_j)$ for every $i\neq j$. A cycle consisting of just one edge is called a \emph{loop}. We denote the set of all closed paths in $\path(E)$ by $\clpath(E)$. A graph which contains no cycles (other than vertices) is called \emph{acyclic}. An edge $e \in E^1$ is an \emph{exit} for a path $p = e_1\cdots e_n$ if $\so(e) = \so(e_i)$ and $e \neq e_i$ for some $i\in \{1,\dots,n\}$. A graph where no cycle has an exit is called a \emph{no-exit graph}.

A vertex $v \in E^0$ for which the set $\so^{-1}(v) = \{e\in E^1 \mid \so(e)=v\}$ is finite is said to have \emph{finite out-degree}. A graph $E$ is said to have \emph{finite out-degree}, or to be \emph{row-finite}, if every vertex of $E$ has finite out-degree. A vertex $v \in E^0$ which is the source of no edges of $E$ is called a \emph{sink}, while a vertex having finite out-degree which is not a sink is called \emph{regular}. A graph $E$ where both $E^0$ and $E^1$ are finite, respectively countable, sets is called a \emph{finite}, respectively \emph{countable}, graph.

\begin{definition} \label{graph-semigp-def}
Given a graph $E=(E^0,E^1,\so, \ra)$, the \emph{graph inverse semigroup $G_E$ of $E$} is the semigroup with zero generated by the sets $E^0$ and $E^1$, together with a set of variables $\{e^* \mid e\in E^1\}$, satisfying the following relations for all $v,w\in E^0$ and $e,f\in E^1$:

\emph{(V)}  $vw = \delta_{v,w}v$,

\emph{(E1)} $\so(e)e=e\ra(e)=e$,

\emph{(E2)} $\ra(e)e^*=e^*\so(e)=e^*$,

\emph{(CK1)} $e^*f=\delta _{e,f}\ra(e)$.
\end{definition}

For all $v \in E^0$ we define $v^*:=v$, and for all paths $p = e_1 \cdots e_n$ ($e_1, \dots, e_n \in E^1$) we set $p^*:=e_n^* \cdots e_1^*$, $\ra(p^*):=\so(p)$, and $\so(p^*):=\ra(p)$. With this notation, every nonzero element of $G_E$ can be written uniquely as $pq^*$ for some $p, q \in \path(E)$, by the CK1 relation, and multiplication of two such elements $pq^*$ and $rs^*$ is given by $$pq^*rs^*=\left\{
\begin{array}{ll}
pts^* & \mbox{if }r=qt\mbox{ for some } t \in \path(E)\\
pt^*s^* & \mbox{if }q=rt\mbox{ for some } t \in \path(E)\\ 
0 & \mbox{otherwise}
\end{array}\right..$$
It is easy to verify that $G_E$ is indeed an inverse semigroup, where the semigroup inverse of a nonzero element $pq^*$ is $qp^*$. We also note that for any graph $E$, extending $\so$ and $\ra$ to $E^1 \cup \{e^* \mid e\in E^1\}$ as above turns $(E^0,E^1 \cup \{e^* \mid e\in E^1\},\so, \ra)$ into a (directed) graph, which is called the \emph{extended graph of $E$}. 

Graph inverse semigroups were first introduced in~\cite{AH}, in order to show that every partially ordered set can be realized as the partially ordered set of nonzero $\mathscr{J}$-classes of an inverse semigroup. (Two elements in a semigroup are $\mathscr{J}$-\emph{equivalent} if they generate the same ideal.) This class of semigroups has since been studied in its own right (e.g., \cite{JL, Krieger, Paterson}).

The following notation will help us classify all central maps on graph inverse semigroups that preserve zero (and hence also all linear traces on contracted semigroup rings arising from them, by Proposition~\ref{C-linear-trace}).

\begin{definition} \label{approx-def}
Let $E$ be a graph and $p, q \in \clpath(E)$. We write $p \approx q$  if there exist paths $x, y \in \path(E)$ such that $p=xy$ and $yx=q$.
\end{definition}

It is shown in \cite[Lemma 12]{ZM2} that $\approx$ is an equivalence relation. Observe that if $q=e_1\cdots e_n \in \clpath (E)\setminus E^0$ for some $e_1, \dots, e_n \in E^1$, then the $\approx$-equivalence class of $q$ consists of $e_1 \cdots e_n, e_2 \cdots e_ne_1, \dots, e_ne_1 \cdots e_{n-1}$. The $\approx$-equivalence class of a vertex consists of just one element. We shall discuss the connection between $\approx$ and the relation $\sim$ on $G_E$ from Definition~\ref{sim-def} in Proposition~\ref{eq-description} below.

\begin{proposition} \label{central-graph-maps}
Let $R$ be a ring, $E$ a graph, and $\delta : G_E \to R$ a map that preserves zero. Then $\delta$ is central if and only if the following conditions hold.
\begin{enumerate}
\item[$(1)$] If $\delta (x) \neq 0$ for some $x \in G_E$, then either $x=pqp^*$ or $x=pq^*p^*$ for some $p \in \path (E)$ and $q \in \clpath (E)$.
\item[$(2)$] For all $p \in \path (E)$ and $q \in \clpath (E)$, we have $\delta(pqp^*)=\delta(q)$ and $\delta(pq^*p^*)=\delta(q^*)$.
\item[$(3)$] For all $p,q \in \clpath (E)$ such that $p \approx q$, we have $\delta (p) = \delta (q)$ and $\delta (p^*) = \delta (q^*)$.
\end{enumerate}
\end{proposition}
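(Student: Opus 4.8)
The plan is to prove both implications using the unique normal form $pq^*$ ($p,q\in\path(E)$) for nonzero elements of $G_E$ together with the explicit multiplication rule recorded after Definition~\ref{graph-semigp-def}. Throughout, for a nonzero $x$ I write $x=ab^*$ in normal form, so that $\ra(a)=\ra(b)$, and I repeatedly exploit the fact that in the product $ab^*\cdot cd^*$ the middle factor $b^*c$ is nonzero only when one of $b,c$ is a prefix of the other. The forward direction is a direct computation from centrality; the converse is a case analysis on the multiplication rule, and that is where the real work lies.

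For the forward direction, assume $\delta$ is central. For (1), take a nonzero $x=ab^*$ and suppose neither $a$ nor $b$ is a prefix of the other. Then the multiplication rule gives $b^*a=0$, so centrality yields $\delta(x)=\delta(ab^*)=\delta(b^*a)=\delta(0)=0$. Hence $\delta(x)\neq 0$ forces one of $a,b$ to be a prefix of the other; writing $a=bq$ (resp.\ $b=aq$) and using $\ra(a)=\ra(b)$ to see that $q$ is automatically closed, we obtain $x=bqb^*$ (resp.\ $x=aq^*a^*$), which is the asserted form. For (2), centrality gives $\delta(pqp^*)=\delta\bigl((pq)p^*\bigr)=\delta\bigl(p^*(pq)\bigr)=\delta(\ra(p)q)=\delta(q)$, and likewise $\delta(pq^*p^*)=\delta\bigl(p\cdot q^*p^*\bigr)=\delta\bigl(q^*p^*\cdot p\bigr)=\delta(q^*\ra(p))=\delta(q^*)$. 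For (3), if $p=xy$ and $q=yx$ then $\delta(p)=\delta(xy)=\delta(yx)=\delta(q)$ directly, and applying the same to $p^*=y^*x^*$ and $q^*=x^*y^*$ gives $\delta(p^*)=\delta(q^*)$.

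For the converse, assume (1)--(3) and take $g=ab^*$, $h=cd^*$; if either is $0$ then $gh=hg=0$, so assume both are nonzero. Since the desired identity $\delta(gh)=\delta(hg)$ is symmetric in $g$ and $h$, it suffices to show that $\delta(gh)\neq 0$ implies $\delta(gh)=\delta(hg)$. When $gh\neq 0$ the multiplication rule puts us in one of two cases, $c=bt$ (so $gh=atd^*$) or $b=ct$ (so $gh=at^*d^*$); by (1) the hypothesis $\delta(gh)\neq 0$ forces $gh$ into a special form, which in turn forces $at$ and $d$ (resp.\ the corresponding pair) to be comparable. In each resulting subcase I would compute $hg=cd^*ab^*$ explicitly, check via the prefix relations already in force that it is again of special form $p'q'p'^{*}$ or $p'q'^{*}p'^{*}$, and verify that its closed-path part $q'$ is a cyclic rotation of the closed-path part $q$ of $gh$, so that $q\approx q'$. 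Conditions (2) and (3) then give $\delta(gh)=\delta(q)=\delta(q')=\delta(hg)$ (with stars inserted uniformly in the starred subcases). As a representative instance: in the case $c=bt$ with $gh=dqd^*$ and $a=du$, one finds $q=ut$ and $hg=b(tu)b^*$, whence $ut\approx tu$ and (2)--(3) yield $\delta(gh)=\delta(ut)=\delta(tu)=\delta(hg)$.

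The main obstacle is the bookkeeping in this last step: the several ways in which the relevant paths can be comparable (and which one is longer) produce a handful of subcases, and in each one must track exactly where the closed path sits and whether a star is present. The conceptual content is uniform, however --- each nonzero product reduces to a closed path read off along a cycle, and passing between $gh$ and $hg$ merely rotates that cycle, which is precisely the content of $\approx$ --- so once the case division is set up, (2) and (3) dispatch every subcase in the same way.
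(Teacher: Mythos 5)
Your proposal is correct and takes essentially the same route as the paper's proof: the forward direction is the same direct computation from centrality (your derivation of (1) straight from the normal form $ab^*$ with $\ra(a)=\ra(b)$ is a mild shortcut past the paper's preliminary observation that $\delta$ vanishes on non-closed paths and their stars), and the converse is the same symmetry reduction to the case $\delta(gh)\neq 0$ followed by a prefix case analysis in which conditions (2) and (3) identify the rotated closed-path parts of $gh$ and $hg$. The subcases you leave as routine do dispatch exactly as in your representative instance --- the paper works through the same three subcases explicitly, using (3) in just one of them --- so nothing essential is missing.
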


\begin{proof}
Suppose that $\delta$ is central, and let $p \in \path (E)$ be such that $\so(p) \neq \ra(p)$. Then $\so(p)p = p$, $p\so(p)=0$, $p^*\so(p) = p^*$, and $\so(p)p^*=0$. We therefore must have $\delta (p) = 0 = \delta (p^*)$.

Now, let $p,s \in \path (E)$ be such that $\delta (ps^*) \neq 0$. Then $s^*p \neq 0$, and hence either $s = pq$ or $p=sq$ for some $q \in \path (E)$. If $s = pq$, then $$0 \neq \delta (ps^*) = \delta (pq^*p^*) = \delta (q^*p^*p) = \delta (q^*),$$ and hence $\so(q)=\ra(q)$, by the previous paragraph. If $p = sq$, then $$0 \neq \delta (ps^*) = \delta (sqs^*) = \delta (s^*sq) = \delta (q),$$ and hence $\so(q)=\ra(q)$, as before. In either case, $q$ is a closed path, and therefore $x=ps^*$ has the form described in (1). Clearly, $\delta$ being central implies (2) and (3).

Conversely, suppose that (1), (2), and (3) hold. Let $p,q,r,s \in \path(E)$, and suppose that $\delta (pq^*rs^*) \neq 0$. We wish to show that $\delta (pq^*rs^*) = \delta (rs^*pq^*)$. By (1), $pq^*rs^* = ghg^*$ or $pq^*rs^* = gh^*g^*$ for some $g \in \path(E)$ and $h \in \clpath (E)$. Let us assume that $pq^*rs^* = ghg^*$, since the other situation can be handled analogously. Thus, there must be some $v \in \path(E)$ such that either $q=rv$ or $r=qv$. In the first case, $$ghg^*= pq^*rs^* = pv^*r^*rs^* = pv^*s^*,$$ which implies that $g = sv$ and $p = gh = svh$. Hence using (2), we have $$\delta (pq^*rs^*) = \delta (svhv^*r^*rs^*) = \delta(svhv^*s^*) = \delta (h)$$ $$= \delta(rvhv^*r^*)  = \delta(rs^*svhv^*r^*) = \delta (rs^*pq^*).$$ We may therefore suppose that $r=qv$. Then $$ghg^*= pq^*rs^* = pq^*qvs^* = pvs^*,$$ which implies that $s = g$ and $pv = gh = sh$. Thus there must be some $u \in \path(E)$ such that either $p=su$ or $s=pu$. In the first case $h = uv$, and hence, using (2) and (3), we have $$\delta (pq^*rs^*) = \delta (suq^*qvs^*) = \delta (suvs^*) = \delta (uv)$$ $$= \delta (vu) = \delta (qvuq^*) = \delta (qvs^*suq^*) = \delta (rs^*pq^*).$$ Let us therefore assume that $r=qv$ and $s=pu$. Then $$ghg^*= pq^*rs^* = pq^*qvu^*p^* = pvu^*p^*,$$ which implies that $g=pu$ and therefore $v=uh$. Hence, again using (2), we have $$\delta (pq^*rs^*) = \delta (pvu^*p^*) = \delta (puhu^*p^*) = \delta (h)$$ $$= \delta (quhu^*q^*) = \delta (qvu^*q^*) = \delta (qvu^*p^*pq^*) = \delta (rs^*pq^*).$$ 

Thus if $\delta (pq^*rs^*) \neq 0$, then $\delta (pq^*rs^*) = \delta (rs^*pq^*)$. By symmetry, it also follows that if $\delta (rs^*pq^*) \neq 0$, then $\delta (rs^*pq^*) = \delta (pq^*rs^*)$. Hence, $\delta (pq^*rs^*) = \delta (rs^*pq^*)$ for all values of $\delta (pq^*rs^*)$ and $\delta (rs^*pq^*)$, and therefore $\delta$ is central.
\end{proof}

\begin{proposition} \label{eq-description}
Let $E$ be a graph, and for each $q \in \clpath (E)$ set 
$$EQ(q) := \{ptp^* \mid p \in \path (E), t \in \clpath (E), \ra(p)=\so(t), t \approx q\} \text{ and}$$
$$EQ(q^*) := \{pt^*p^* \mid p \in \path (E), t \in \clpath (E), \ra(p)=\ra(t), t \approx q\}.$$
Then every nonzero $\sim$-equivalence class of $G_E$ $($in the sense of Definition~\ref{sim-def}$)$ is of the form $EQ(q)$ or $EQ(q^*)$ for some $q \in \clpath (E)$.

In particular, for all $q_1, q_2 \in \clpath (E)$ we have $EQ(q_1) \cap EQ(q_2) \neq \emptyset$ if and only if $q_1 \approx q_2$ if and only if $EQ(q_1^*) \cap EQ(q_2^*) \neq \emptyset$, and $EQ(q_1) \cap EQ(q_2^*) \neq \emptyset$ if and only if $q_1=q_2 \in E^0$.
\end{proposition}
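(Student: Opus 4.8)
The plan is to first translate membership in the sets $EQ(q)$ and $EQ(q^*)$ into a statement about the (unique) normal form $pr^*$ of a nonzero element of $G_E$, and then to use the central-map machinery of Proposition~\ref{central-graph-maps} to pin down the $\sim$-classes exactly. The elementary observation underlying everything is a prefix criterion for products: for paths $r$ and $p$ one has $r^*p = t$ when $p = rt$, $r^*p = t^*$ when $r = pt$, and $r^*p = 0$ otherwise, so $r^*p \neq 0$ iff one of $r,p$ is a prefix of the other. Using the uniqueness of the normal form, I would record the following dictionary: a nonzero $x = pr^*$ lies in some $EQ(q)$ iff $r$ is a prefix of $p$, say $p = rs$ (forcing $s$ to be a closed path since $\ra(p)=\ra(r)$), with $s \approx q$; it lies in some $EQ(q^*)$ iff $p$ is a prefix of $r$, say $r = ps$, with $s \approx q$; and the closed path $s$ is then uniquely determined by $x$. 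This dictionary immediately yields all three ``in particular'' assertions: the uniqueness of the closed part $s$ gives $EQ(q_1)\cap EQ(q_2)\neq\emptyset \iff q_1\approx q_2$ (and likewise for the starred sets), while $x\in EQ(q_1)\cap EQ(q_2^*)$ forces both $r$ to be a prefix of $p$ and $p$ to be a prefix of $r$, hence $p=r$ and $x=pp^*$, whose closed part is the vertex $\ra(p)$, giving $q_1=q_2=\ra(p)\in E^0$.

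Next I would show each $EQ(q)$ (resp.\ $EQ(q^*)$) is contained in a single $\sim$-class. For $ptp^*\in EQ(q)$ with $t\approx q$, the factorization $a = pt$, $b = p^*$ gives $ab = ptp^*$ and $ba = p^*pt = \ra(p)t = t$ (since $\ra(p)=\so(t)$), so $ptp^*\sim t$; and $t\approx q$ means $t = xy$, $q = yx$ for some paths $x,y$, whence $t\sim q$ in one step. Thus $ptp^*\sim q$, and by transitivity $EQ(q)$ lies in one $\sim$-class. The starred case is symmetric, or follows by applying the anti-automorphism $x\mapsto x^*$, which preserves $\sim$ and sends $EQ(q)$ onto $EQ(q^*)$. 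In the opposite direction, the prefix dictionary shows that every nonzero $x=pr^*$ with neither of $p,r$ a prefix of the other is already $\sim 0$, via $a=p$, $b=r^*$, $ba = r^*p = 0$; hence every nonzero element is either $\sim 0$ or belongs to some $EQ(q)$ or $EQ(q^*)$.

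The crux is the reverse inclusion: that a $\sim$-class containing an element of $EQ(q)$ cannot be larger than $EQ(q)$, and in particular that a closed path is never $\sim 0$. Rather than analyze all factorizations of an element of $EQ(q)$ by hand, I would manufacture separating central maps. For a fixed $\approx$-class of closed paths with representative $q$, define $\delta_q\colon G_E\to\Z$ to be the indicator function of the set $EQ(q)$. Using the prefix dictionary, I can check that $\delta_q$ satisfies conditions $(1)$--$(3)$ of Proposition~\ref{central-graph-maps}: e.g.\ $\delta_q(pq'p^*)=\delta_q(q')$ because both equal $1$ exactly when $q'\approx q$, and $\delta_q(pq'^*p^*)=0=\delta_q(q'^*)$ since a starred form never lies in $EQ(q)$. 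Thus $\delta_q$ is central, and since a central map is constant on each $\sim$-class while $\delta_q(q)=1\neq 0=\delta_q(0)$, it follows that $q\not\sim 0$; moreover, if $y$ lies in the $\sim$-class of $q$, then $\delta_q(y)=\delta_q(q)=1$, forcing $y\in EQ(q)$. Combined with the containment $EQ(q)\subseteq[q]$ from the previous step, this gives $[q]=EQ(q)$, and the starred sets are handled by the involution. Finally, given any nonzero $\sim$-class, I choose a representative $x$; since $x\not\sim 0$, it lies in some $EQ(q)$ or $EQ(q^*)$, and the class therefore equals that set.

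The main obstacle is precisely this last inclusion, where a naive combinatorial approach---classifying every way to write an element of $EQ(q)$ as $ab$ and checking that $ba$ stays in $EQ(q)$---becomes an unpleasant case analysis. The device that sidesteps it is the construction of the indicator central maps $\delta_q$; the only real work there is the routine but careful verification of the three conditions of Proposition~\ref{central-graph-maps}, which rests throughout on the uniqueness of the normal form $pr^*$.
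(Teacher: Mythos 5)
Your proof is correct, and its central device---manufacturing indicator central maps and certifying them via Proposition~\ref{central-graph-maps}---is essentially the same engine the paper uses: the paper bundles all of your maps $\delta_q$ into a single map $\delta : G_E \to C^{(S\cup S^*)}$ sending elements of $EQ(q)$ to $\epsilon_{(q)}$ and elements of $EQ(q^*)$ (for $q \notin E^0$) to $\epsilon_{(q)^*}$, and its well-definedness check is your prefix dictionary in disguise (it also extracts the ``in particular'' clause from that check, just as you extract it from the dictionary). Where you genuinely diverge is the step showing that every $x \not\sim 0$ lies in some $EQ(q)$ or $EQ(q^*)$: the paper gets this by invoking Theorem~\ref{min-trace-exists}(1) to produce a minimal trace $t_\delta$, deducing $\delta(x) \neq 0$ from minimality via Theorem~\ref{min-trace-exists}(2), and then applying Proposition~\ref{central-graph-maps}(1); you instead argue elementarily from the normal form $pr^*$ that if neither of $p, r$ is a prefix of the other then $r^*p = 0$, so $x = p\cdot r^* \sim 0$ in a single step. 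Your route is more self-contained and avoids the minimal-trace machinery for this step entirely, at the cost of leaning explicitly on the uniqueness of the normal form---which the paper has already established in Section~\ref{semigp-section}, so this is legitimate. One small slip in your centrality check: you assert that a starred form never lies in $EQ(q)$, so that $\delta_q(pq'^*p^*) = 0 = \delta_q(q'^*)$; but when $q' \in E^0$ is a vertex one has $pq'^*p^* = pp^* \in EQ(\ra(p))$, so the left side need not vanish. In that degenerate case, however, $q'^* = q'$ and the starred equation of condition (2) coincides with the unstarred one, so the condition still holds and your verification goes through once the vertex case is noted separately.
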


\begin{proof}
Let $q \in \clpath (E)$ and $p_1t_1p_1^*, p_2t_2p_2^* \in EQ(q)$ be any elements. Since $t_1 \approx q \approx t_2$, there exist $x,y \in \path (E)$ such that $t_1 = xy$ and $t_2 = yx$. Hence $$p_1t_1p_1^* \sim p_1^*p_1t_1 = t_1 = xy \sim yx = t_2 = p_2^*p_2t_2 \sim p_2t_2p_2^*,$$ and therefore all elements of $EQ(q)$ are $\sim$-equivalent. A similar argument shows the analogous statement for $EQ(q^*)$.

Next, let $x \in G_E$ be any element such that $x \not\sim 0$. We wish to show that $x \in EQ(q) \cup EQ(q^*)$ for some $q \in \clpath (E)$. Let $C$ be any commutative unital ring. Then, by Theorem~\ref{min-trace-exists}(1), there is a minimal $C$-linear trace $t : \overline{CG_E} \to R$, for some $C$-algebra $R$. By Proposition~\ref{C-linear-trace}, $t=t_\delta$ for some central map $\delta : G_E \to R$ that preserves zero. Since $t$ is minimal and $x \not\sim 0$, by Theorem~\ref{min-trace-exists}(2), $\delta (x) \neq 0$. Hence, by Proposition~\ref{central-graph-maps}, $x = pqp^*$ or $x = pq^*p^*$ for some $p \in \path (E)$ and $q \in \clpath (E)$. Therefore either $x \in EQ(q)$ or $x \in EQ(q^*)$.

To conclude the proof of the first claim it now suffices to show that for any $q \in \clpath (E)$ and $x \in G_E$, $x \sim q$ implies that $x \in EQ(q)$, while $x \sim q^*$ (and $q \notin E^0$) implies that $x \in EQ(q^*)$. Let $C$ be any commutative unital ring, let $S$ be the set of $\approx$-equivalence classes of $\clpath (E)$, and let $S^*$ be another copy of $S$. Define $\delta : G_E \to C^{(S\cup S^*)}$ by
$$\delta (y) = \left\{ \begin{array}{cl}
\epsilon_{(q)} & \text{if } y \in EQ(q) \text{ for some } q\in \clpath(E)\\
\epsilon_{(q)^*} & \text{if } y \in EQ(q^*) \text{ for some } q\in \clpath(E)\setminus E^0\\
0 & \textrm{otherwise}
\end{array}\right.,$$ where $\epsilon_{(q)} \in C^{(S \cup S^*)}$ denotes the element with $1 \in C$ in the coordinate indexed by the $\approx$-equivalence class of $q$ in $S$ and zeros elsewhere, while $\epsilon_{(q)^*} \in C^{(S \cup S^*)}$ denotes the element with $1 \in C$ in the coordinate indexed by the $\approx$-equivalence class of $q$ in $S^*$ and zeros elsewhere. We shall show below that $\delta$ is well defined, but assuming for the moment that this is established, by Proposition~\ref{central-graph-maps}, $\delta$ is a central map. It follows that if $x \sim q$, then $\delta (x) = \delta (q) = \epsilon_{(q)}$, and hence $x \in EQ(q)$. Similarly, if $x \sim q^*$ and $q \notin E^0$, then $\delta (x) = \delta (q^*) = \epsilon_{(q)^*}$, and hence $x \in EQ(q^*)$.

To show that the map $\delta$ is well defined, suppose that $y \in G_E$ belongs to $EQ(q_1)$ and to $EQ(q_2)$ for some $q_1, q_2 \in \clpath (E)$. Then $y = ptp^*$ for some $p \in \path (E)$ and $t \in \clpath (E)$, where $t \approx q_1$ and $t \approx q_2$. It follows that $q_1 \approx q_2$, and therefore $EQ(q_1)=EQ(q_2)$, showing that the assignment $\delta (y) = \epsilon_{(q_1)}$ is unambiguous. Similarly, if $y \in G_E$ belongs to $EQ(q_1^*)$ and to $EQ(q_2^*)$ for some $q_1, q_2 \in \clpath (E)$, then $q_1 \approx q_2$, and therefore $\delta (y) = \epsilon_{(q_1)^*}$ is well defined. Finally, if $y \in G_E$ belongs to $EQ(q_1)$ and to $EQ(q_2^*)$ for some $q_1, q_2 \in \clpath (E)$, then it must be the case that $q_1=q_2$ is a vertex. We have excluded $E^0$ in the second case of our definition of $\delta$ for this reason, and hence again, $\delta (y) = \epsilon_{(q_1)}$ is well defined.

In the course of showing that $\delta$ is well defined we have also proved one direction of each equivalence in the final claim. The other direction of each equivalence is trivial.
\end{proof}

Using Theorem~\ref{min-trace-exists} and Proposition~\ref{eq-description} we obtain a complete description of the minimal linear traces on the contracted semigroup ring $\overline{CG_E}$, for any commutative unital ring $C$ and graph $E$. In the case where $C$ is a field, $\overline{CG_E}$ is known as a \emph{Cohn path algebra}. 

\begin{corollary} \label{Cohn-min-tr}
Let $C$ be a commutative unital ring, $R$ a $C$-algebra, $E$ a graph, and $t: \overline{CG_E} \to R$ a $C$-linear trace. Then $t$ is minimal if and only if for all $q_1, \dots, q_n \in \clpath (E)$ and $p_1, \dots, p_m \in \clpath (E)\setminus E^0$, such that $q_i \not\approx q_j$ and $p_i \not\approx p_j$ for $i \neq j$, the elements $t(q_1), \dots, t(q_n),$  $t(p_1^*), \dots, t(p_m^*)$ are $C$-linearly independent in $R$.
\end{corollary}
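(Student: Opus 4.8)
The plan is to derive this directly from Theorem~\ref{min-trace-exists}(2), feeding in the description of the nonzero $\sim$-equivalence classes of $G_E$ furnished by Proposition~\ref{eq-description}. Since $G_E$ is a semigroup with zero, Theorem~\ref{min-trace-exists}(2) applies verbatim with $G = G_E$: the trace $t$ is minimal if and only if whenever $g_1, \dots, g_k \in G_E$ have pairwise distinct nonzero $\sim$-classes, the elements $t(g_1), \dots, t(g_k)$ are $C$-linearly independent in $R$. The entire task is therefore to re-express this abstract criterion in terms of closed paths, and no new computation with the multiplication in $G_E$ is needed.

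First I would fix canonical representatives. By Proposition~\ref{eq-description}, every nonzero $\sim$-class of $G_E$ equals $EQ(q)$ for some $q \in \clpath(E)$, or $EQ(q^*)$ for some $q \in \clpath(E) \setminus E^0$ (vertices are excluded from the starred family because $v^* = v$). Using the defining forms of $EQ(q)$ and $EQ(q^*)$ one checks that $q \in EQ(q)$ and $q^* \in EQ(q^*)$, so $q$ and $q^*$ are natural representatives of their classes. Moreover, since $t$ is a $C$-linear trace, Proposition~\ref{C-linear-trace} gives $t = t_\delta$ for the central map $\delta$ obtained by restricting $t$ to $G_E$; and any central map is constant on $\sim$-classes, because $\delta(ab) = \delta(ba)$ forces $\delta$ to be unchanged along every defining chain of the relation $\sim$. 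Hence $t(g_i)$ depends only on the class of $g_i$, and I may replace each $g_i$ by the representative $q$ or $q^*$ of its class.

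Next I would translate the distinctness of classes via the intersection statements at the end of Proposition~\ref{eq-description}: $EQ(q_1) = EQ(q_2)$ if and only if $q_1 \approx q_2$, $EQ(p_1^*) = EQ(p_2^*)$ if and only if $p_1 \approx p_2$, and a class of the first type coincides with one of the second type only when the starred representative is a vertex (since $EQ(q) \cap EQ(p^*) \neq \emptyset$ forces $q = p \in E^0$). Consequently a finite family of pairwise distinct nonzero $\sim$-classes is exactly the data of closed paths $q_1, \dots, q_n$ with $q_i \not\approx q_j$ together with non-vertex closed paths $p_1, \dots, p_m$ with $p_i \not\approx p_j$ for $i \neq j$, and the associated representatives are precisely $q_1, \dots, q_n, p_1^*, \dots, p_m^*$. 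Substituting this correspondence into the criterion of Theorem~\ref{min-trace-exists}(2) then yields the asserted equivalence.

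The one point requiring genuine care — and the step most likely to conceal an error — is the cross-type disjointness. I must confirm that restricting to $p_j \in \clpath(E) \setminus E^0$ removes every overlap between the classes $EQ(q_i)$ and $EQ(p_j^*)$, so that no distinctness condition linking the two families is needed in the statement, and, conversely, that vertices are never double-counted: a vertex $v$ contributes only the class $EQ(v)$ among the $q_i$, since $EQ(v^*) = EQ(v)$. Once this bookkeeping is pinned down, the bijection between finite families of distinct nonzero $\sim$-classes and the tuples $(q_1, \dots, q_n; p_1, \dots, p_m)$ is exact, and the corollary follows immediately.
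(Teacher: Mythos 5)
Your proposal is correct and follows essentially the same route as the paper: both reduce the corollary to the criterion of Theorem~\ref{min-trace-exists}(2) and then use Proposition~\ref{eq-description} to match families of distinct nonzero $\sim$-classes with tuples $q_1,\dots,q_n \in \clpath(E)$ and $p_1,\dots,p_m \in \clpath(E)\setminus E^0$ satisfying the pairwise $\not\approx$ conditions, invoking centrality of $t$ to pass from arbitrary class elements to the representatives $q_i$ and $p_j^*$. Your explicit attention to the cross-type disjointness (that excluding vertices from the starred family eliminates overlaps $EQ(q)\cap EQ(p^*)$) is exactly the bookkeeping the paper's proof handles implicitly via the intersection statements of Proposition~\ref{eq-description}.
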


\begin{proof}
We shall show that the following two statements are equivalent, from which the desired conclusion will follow, by Theorem~\ref{min-trace-exists}(2).
\begin{enumerate}
\item[$(1)$] For all $g_1, \dots, g_l \in G_E$, such that $[g_1], \dots, [g_l]$ are distinct nonzero $\sim$-equivalence classes of $G_E$, the elements $t(g_1), \dots, t(g_l)$ are $C$-linearly independent in $R$.
\item[$(2)$] For all $q_1, \dots, q_n \in \clpath (E)$ and $p_1, \dots, p_m \in \clpath (E)\setminus E^0$, such that $q_i \not\approx q_j$ and $p_i \not\approx p_j$ for $i \neq j$, the elements $t(q_1), \dots, t(q_n), t(p_1^*), \dots, t(p_m^*)$ are $C$-linearly independent in $R$.
\end{enumerate}

Suppose that (1) holds, and let $q_1, \dots, q_n, p_1, \dots, p_m$ be as in (2). By Proposition~\ref{eq-description}, the elements $q_1, \dots, q_n, p_1^*, \dots, p_m^*$ fall into distinct nonzero $\sim$-equivalence classes of $G_E$. Hence $t(q_1), \dots, t(q_n), t(p_1^*), \dots, t(p_m^*)$ must be $C$-linearly independent, by (1), and therefore (2) holds.

Conversely, suppose that (2) holds, and let $g_1, \dots, g_l \in G_E$ be such that $[g_1], \dots, [g_l]$ are distinct and nonzero. By Proposition~\ref{eq-description}, for each $g_i$ we can find some $q \in \clpath (E)$ such that $g_i \sim q$, or some $p \in \clpath (E) \setminus E^0$ such that $g_i \sim p^*$. Let us index the elements $q$ and $p$ selected as $q_1, \dots, q_n, p_1, \dots, p_m$. Since $[g_1], \dots, [g_l]$ are distinct, we must have $q_i \not\approx q_j$ and $p_i \not\approx p_j$ for $i \neq j$. Then, by (2), the elements $t(q_1), \dots, t(q_n), t(p_1^*), \dots, t(p_m^*)$ are $C$-linearly independent. Since $t$ is central, it must take the same value on all elements of a $\sim$-equivalence class, and hence $t(g_1), \dots, t(g_l)$ must be $C$-linearly independent as well, proving (1).
\end{proof}

\section{Involutions and Faithful Traces}

For the rest of this paper we shall be interested in traces on rings with involutions. Let us recall the relevant notions.

\begin{definition} \label{invol-def}
Let $R$ be a ring. A map $\, *:R\to R$ $($written using superscript notation$)$ is an \emph{involution} if $\, (x^*)^* = x$, $\, (x+y)^* = x^*+y^*$, and $\, (xy)^*=y^*x^*$ for all $x,y \in R$. If $R$ has an involution, then it is said to be a $\, *$-\emph{ring}.

If $R$ is a $\, *$-ring and $x \in R$ is a sum of elements of the form $yy^*$ $(y \in R)$, then $x$ is referred to as a \emph{positive} element, and this is denoted by $x\geq 0$, or by $x>0$, if $x \neq 0$. 

An involution $\, *$ on $R$ is \emph{positive definite} if for all $x_1, \dots, x_n \in R$, $\, \sum_{i=1}^n x_ix_i^* = 0$ implies that each $x_i=0$.

If $f: R \to T$ is a homomorphism of $\, *$-rings, then $f$ is a $\, *$-\emph{homomorphism} or an \emph{involution homomorphism} provided that $f(x^*)=f(x)^*$ for all $x \in R$. A $\, *$-\emph{isomorphism}, or an \emph{involution isomorphism}, is defined analogously.
\end{definition}

It is well known (see e.g.,~\cite[Section 50]{Berberian}) and easy to see that extending $\geq$ to all elements of $R$ via
\[x\geq y\mbox{ if and only if }x-y\geq 0\]
gives a reflexive and transitive relation. 

\begin{definition}
Let $R$ and $T$ be $\, *$-rings, and let $t : R \to T$ be a trace. We say that $t$ is \emph{positive} in case $t(x)\geq 0$ for all $x \in R$ with $x\geq 0$, and that $t$ is \emph{faithful} in case $t(x)> 0$ for all $x \in R$ with $x> 0$.
\end{definition}

Our primary interest henceforth will be in Leavitt path algebras, which constitute a particular class of quotients of semigroup rings. However, we conclude this section with several preliminary results of a more general nature, which will be required later. These are all easy to show and are mostly part of the folklore. We therefore only sketch the proofs.

Given $*$-rings $T_i$ $(i\in I)$, we understand the direct sum $\bigoplus_{i\in I} T_i$ to have the corresponding component-wise involution, where $(a_i)^*_{i \in I} = (a_i^*)_{i \in I}$ for every $(a_i)_{i \in I} \in \bigoplus_{i\in I} T_i$.

\begin{lemma} \label{prod-lemma}
Let $C$ be a commutative ring, and let $P$, $R$, $S$  be $C$-algebras with involutions.
\begin{enumerate}
\item[$(1)$] If $t : P \to R$ and $u : R \to S$ are $C$-linear positive $($respectively, faithful$)$ traces, then so is $u \circ t : P \to S$.
\item[$(2)$] If $\, \{T_i\}_{i \in I}$ is a collection of commutative $C$-subalgebras of $R$, and the involution $\, *$ on $R$ is positive definite, then $t((x_i)_{i \in I}) = \sum_{i \in I} x_i$ defines a faithful $C$-linear trace $t : \bigoplus_{i\in I} T_i \to R$.
\end{enumerate}
\end{lemma}

\begin{proof}
The verification of (1) is routine. 

Given $t$ as in (2), it is clear that $t$ is an additive $C$-linear map, and since each $T_i$ is commutative, $t$ is a trace. It is also positive, since for any $(x_i)_{i \in I} \in \bigoplus_{i\in I} T_i$, we have $$t((x_i)_{i \in I}(x_i)^*_{i \in I}) = t((x_ix_i^*)_{i \in I}) = \sum_{i \in I} x_ix_i^*.$$ That $t$ is faithful follows from the fact that the involution on $R$ is positive definite.
\end{proof}

\begin{lemma} \label{inv-matrix}
Let $C$ be a commutative ring with involution $\, *$, $\kappa$ an arbitrary nonzero cardinal, and $\, \tr: \M_\kappa (C) \to C$ the usual trace. Then $\, (a_{ij})^* := (a_{ji}^*)$ $((a_{ij}) \in \M_\kappa (C))$ defines an involution on $\, \M_\kappa (C)$, with respect to which $\, \tr$ is positive. Also, if $\, *$ is positive definite on $C$, then $\, \tr$ is faithful.
\end{lemma}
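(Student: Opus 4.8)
The plan is to verify the three claims of Lemma~\ref{inv-matrix} in sequence: first that the conjugate-transpose formula $(a_{ij})^* := (a_{ji}^*)$ is an involution on $\M_\kappa(C)$, then that $\tr$ is positive with respect to it, and finally that $\tr$ is faithful when $*$ is positive definite on $C$. The first claim is a routine computation that I would dispatch quickly: additivity and the order-two property $((a_{ij})^*)^* = (a_{ij})$ follow entrywise from the corresponding properties of $*$ on $C$, and the anti-multiplicativity $(AB)^* = B^*A^*$ reduces to checking that the $(i,j)$ entry of $(AB)^*$, namely $(\sum_k a_{jk}b_{ki})^* = \sum_k b_{ki}^*a_{jk}^*$, matches the $(i,j)$ entry of $B^*A^*$, using that $C$ is commutative so the order of the scalar factors $b_{ki}^*$ and $a_{jk}^*$ is immaterial. (Note the matrices have only finitely many nonzero entries, so all sums are finite and well-defined.)

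For positivity, I would compute $\tr(AA^*)$ for an arbitrary $A = (a_{ij}) \in \M_\kappa(C)$ and exhibit it as a positive element of $C$. The $(i,i)$ entry of $AA^*$ is $\sum_j a_{ij}(A^*)_{ji} = \sum_j a_{ij}a_{ij}^*$, so summing over the diagonal gives
\[
\tr(AA^*) = \sum_i \sum_j a_{ij}a_{ij}^*,
\]
which is manifestly a sum of elements of the form $cc^*$ with $c \in C$, hence $\geq 0$ in $C$. Since an arbitrary positive element of $\M_\kappa(C)$ is a finite sum of terms $AA^*$ and $\tr$ is additive, $\tr$ sends positive elements to positive elements, establishing positivity.

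The faithfulness claim is the only part requiring the extra hypothesis, and it is where I would be most careful, though I do not expect a genuine obstacle. Suppose $*$ is positive definite on $C$ and let $X \in \M_\kappa(C)$ satisfy $X > 0$, i.e.\ $X = \sum_{l=1}^n A_l A_l^*$ with $X \neq 0$. Then by the positivity computation, $\tr(X) = \sum_{l=1}^n \sum_{i,j} (a^{(l)}_{ij})(a^{(l)}_{ij})^*$, a finite sum of terms $cc^*$ in $C$. If this sum were zero, positive definiteness of $*$ on $C$ would force every $a^{(l)}_{ij} = 0$, whence each $A_l = 0$ and so $X = 0$, contradicting $X \neq 0$. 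Therefore $\tr(X) \neq 0$; combined with positivity this gives $\tr(X) > 0$, so $\tr$ is faithful. The main subtlety is simply to keep the two quantifiers straight — that the vanishing of the single scalar $\tr(X)$ already propagates through positive definiteness to kill all the matrix entries — but the finiteness of all the sums involved makes this immediate.
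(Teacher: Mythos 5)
Your proof is correct and follows essentially the same route as the paper's: both rest on the computation $\tr(AA^*) = \sum_i \sum_j a_{ij}a_{ij}^*$, from which positivity is immediate and faithfulness follows from positive definiteness of $*$ on $C$. The paper leaves the extension from a single term $AA^*$ to a general positive element $\sum_l A_lA_l^*$ implicit; you spell it out, which is exactly the right level of care.
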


\begin{proof}
The fact that $*$ extends to an involution on $\M_\kappa (C)$ follows immediately from properties of $*$ on $C$ and the transpose operation on $\M_\kappa (C)$. Also, for any $A = (a_{ij}) \in \M_\kappa (C)$ we have $\tr(AA^*) = \sum_i\sum_j a_{ij}a_{ij}^*$, from which we conclude that $\tr$ is positive. If $*$ is positive definite on $C$, then this formula also implies that $\tr$ is faithful.
\end{proof}

\begin{lemma} \label{inv-laurent}
Let $C$ be a commutative ring with involution $\, *$. 
\begin{enumerate}
\item[$(1)$] Defining $\, (\sum_i a_i x^i)^* : = \sum_i a_i^* x^{-i}$ $(a_i \in C)$ gives an involution on $C[x,x^{-1}]$. 
\item[$(2)$] If $\, *$ is positive definite on $C$, then its extension to $C[x,x^{-1}]$ in $\, (1)$ is positive definite as well.
\item[$(3)$] If $\, *$ is positive definite on $C$, then defining $t : C[x,x^{-1}] \to C$ by $t(\sum_i a_i x^i) = a_0$ $(a_i \in C)$ gives a  faithful $C$-linear trace.
\end{enumerate}
\end{lemma}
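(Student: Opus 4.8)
The plan is to treat the three parts in sequence, observing that parts (2) and (3) both reduce to a single computation: the constant term of a product $yy^*$. For part (1), I would verify the three involution axioms of Definition~\ref{invol-def} directly on a general element $y = \sum_i a_i x^i$. The identities $(y^*)^* = y$ and $(y+z)^* = y^* + z^*$ follow at once from the corresponding properties of $*$ on $C$ together with the reindexing $i \mapsto -i$ (writing $y^* = \sum_k a_{-k}^* x^k$ in standard form and applying $*$ again returns $\sum_k a_{-k} x^{-k} = y$). For the anti-multiplicativity $(yz)^* = z^* y^*$, I would write $z = \sum_j b_j x^j$, expand $yz = \sum_k \big(\sum_{i+j=k} a_i b_j\big) x^k$, apply $*$ coefficientwise using $(a_ib_j)^* = b_j^* a_i^*$, and compare with the expansion of $y^* z^*$; since $C$ is commutative the two agree, so in fact $(yz)^* = y^* z^* = z^* y^*$.

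For part (2), the key observation is that for $y = \sum_i a_i x^i$ one has $yy^* = \sum_{i,j} a_i a_j^* x^{i-j}$, whose coefficient of $x^0$ is exactly $\sum_i a_i a_i^*$. Thus, given a relation $\sum_{l=1}^n y_l y_l^* = 0$ with $y_l = \sum_i a_{l,i} x^i$, I would compare the constant terms of both sides to obtain $\sum_l \sum_i a_{l,i} a_{l,i}^* = 0$ in $C$. Since this is a finite sum of elements of the form $c c^*$ and $*$ is positive definite on $C$, every $a_{l,i} = 0$, whence each $y_l = 0$. This establishes positive definiteness of the extended involution.

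For part (3), $C$-linearity and additivity of $t$ are immediate from the definition, and $t$ is automatically a trace because $C[x,x^{-1}]$ is commutative, so $t(yz) = t(zy)$ holds trivially. For faithfulness I would take an arbitrary $y > 0$, that is $y = \sum_l z_l z_l^* \neq 0$ with $z_l = \sum_i a_{l,i} x^i$, and reuse the constant-term formula from part (2) to compute $t(y) = \sum_l \sum_i a_{l,i} a_{l,i}^*$. This exhibits $t(y)$ as a sum of elements $c c^*$, so $t(y) \geq 0$; moreover $t(y) = 0$ would force every $a_{l,i} = 0$ by positive definiteness, contradicting $y \neq 0$. Hence $t(y) > 0$, as required.

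I do not expect a serious obstacle: the entire content is the constant-term identity asserting that the coefficient of $x^0$ in $yy^*$ equals $\sum_i a_i a_i^*$, which converts positive definiteness of $C$ into both the positive definiteness of $C[x,x^{-1}]$ and the faithfulness of $t$. The only points requiring mild care are the index bookkeeping under the substitution $x^i \mapsto x^{-i}$ in part (1), and the observation that it is the commutativity of $C$ that collapses anti-multiplicativity to ordinary multiplicativity there. (It is worth noting that this involution is precisely the standard group-ring involution on $C[x,x^{-1}] \cong C\Z$, which accounts for its folklore status.)
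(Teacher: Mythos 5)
Your proposal is correct and follows essentially the same route as the paper: part (1) by routine verification of the involution axioms, and parts (2) and (3) via the constant-term identity that the coefficient of $x^0$ in $yy^*$ is $\sum_i a_i a_i^*$, which transfers positive definiteness from $C$ to $C[x,x^{-1}]$ and yields positivity and faithfulness of $t$. The only difference is cosmetic: you spell out the axiom checks for (1) that the paper dismisses as routine, and you fold positivity and faithfulness of $t$ into one computation where the paper states them separately.
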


\begin{proof}
Checking that $*$ extends to an involution on $C[x,x^{-1}]$ as in (1) is routine. 

To show (2), suppose that $*$ is positive definite on $C$, let $r_1, \dots, r_n \in C[x,x^{-1}]$, and suppose that $\sum_{i=1}^n r_ir_i^*=0$. Write each $r_i$ as $r_i = \sum_j a_{ij}x^j$ for some $a_{ij} \in C$. Then $$0 = \sum_{i=1}^n r_ir_i^* = \sum_{i=1}^n \bigg(\sum_j a_{ij}x^j\bigg)\bigg(\sum_j a_{ij}^*x^{-j}\bigg)  = \sum_{i=1}^n  \bigg(\sum_j a_{ij}a_{ij}^* + f_i(x)\bigg),$$ where each $f_i(x) \in C[x,x^{-1}]$ is a polynomial with $0$ as the degree-zero term. Thus, $0 = \sum_{i=1}^n \sum_j a_{ij}a_{ij}^*$, which implies that each $a_{ij}=0$, since $*$ is positive definite on $C$. Therefore, each $r_i=0$, showing that $*$ is positive definite on $C[x,x^{-1}]$.

For (3), suppose that $*$ is positive definite on $C$, and let $t$ be as in the statement. Then clearly, $t$ is a $C$-linear map, and since $C[x,x^{-1}]$ is commutative, it is a trace. The map $t$ is also positive, since for any $r=\sum_i a_i x^i \in C[x,x^{-1}]$ we have $$t(rr^*) = t\Big(\Big(\sum_ia_ix^{i} \Big)\Big(\sum_ia_i^*x^{-i} \Big)\Big) = t\Big(\sum_i\sum_j a_ia_j^* x^{i-j} \Big) = \sum_{i-j=0} a_ia_j^* = \sum_i a_ia_i^*.$$ The faithfulness of $t$ follows from $*$ being positive definite on $C$.
\end{proof}

\section{Traces on Leavitt Path Algebras}

In this section we describe all linear traces on Leavitt path algebras, and, under mild assumptions, the Leavitt path algebras that admit faithful traces. We begin by recalling the relevant definitions.

\begin{definition}\label{LPAdefinition}  
Let $K$ be a field, $E$ a graph, and $G_E$ the corresponding graph inverse semigroup. Then the contracted semigroup ring $\overline{KG_E}$ is known as the \emph{Cohn path $K$-algebra of $E$}, and is usually denoted by $C_K(E)$.
 
Let $N$ be the ideal of $C_K(E)$ generated by all elements of the form $v-\sum_{e\in \so^{-1}(v)} ee^*,$ where $v \in E^0$ is a regular vertex. Then $C_K(E)/N$ is called the \emph{Leavitt path $K$-algebra  of $E$}, and is denoted by $L_K(E)$.
\end{definition}

We note that Leavitt path algebras are typically defined  without reference to Cohn path algebras or graph inverse semigroups, as the $K$-algebras generated by the sets $\{v\mid v\in E^0\}$ and $\{e,e^*\mid e\in E^1\}$ (arising from a directed graph $E=(E^0,E^1,\so, \ra)$), which satisfy the conditions (V), (E1), (E2), and (CK1) from Definition~\ref{graph-semigp-def}, along with $v=\sum_{e\in \so^{-1}(v)} ee^*$ for all regular $v \in E^0$ (e.g., \cite{AP, AAPM, AM, AMP, ARV}).

From our description of $G_E$ in Section~\ref{semigp-section}, it follows that every element of $C_K(E)$ and $L_K(E)$ (by slightly abusing notation) can be expressed in the form $\sum_{i=1}^n a_ip_iq_i^*$ for some $a_i \in K$ and $p_i,q_i \in \path (E)$. It is known (see~\cite[Theorem 1.5.17]{AAM} or~\cite[Lemma 4.8]{Vas}) that every Cohn path $K$-algebra is isomorphic to some Leavitt path $K$-algebra. We shall therefore restrict our attention to Leavitt path algebras from now on.

If $K$ has an involution $*$, then $L_K(E)$ acquires one as well via $(\sum_{i=1}^n a_ip_iq_i^*)^* = \sum_{i=1}^n a_i^*q_ip_i^*$. It is shown in~\cite[Proposition 2.4]{ARV} that if the involution on $K$ is positive definite, then the same is true of the induced involution on $L_K(E)$.

Next, let us describe the linear traces on Leavitt path algebras. To do so we shall require~\cite[Lemma 9]{AM}, which says that letting $N \subseteq C_K(E)$ be as in Definition~\ref{LPAdefinition}, for every regular $v\in E^0$ and every $p \in \path (E)\setminus E^0$ we have $$\Big(v-\sum_{e\in \so^{-1}(v)} ee^*\Big)p= 0 = p^*\Big(v-\sum_{e\in \so^{-1}(v)} ee^*\Big).$$

\begin{theorem} \label{LPA-lin-tr}
Let $K$ be a field, $R$ a $K$-algebra, and $E$ a graph. Then there is a one-to-one correspondence between the $K$-linear traces $t: L_K(E) \to R$ and the maps $\delta : G_E \to R$ that preserve zero and satisfy the following conditions.
\begin{enumerate}
\item[$(1)$] If $\delta (x) \neq 0$ for some $x \in G_E$, then either $x=pqp^*$ or $x=pq^*p^*$ for some $p \in \path (E)$ and $q \in \clpath (E)$.
\item[$(2)$] For all $p \in \path (E)$ and $q \in \clpath (E)$, we have $\delta(pqp^*)=\delta(q)$ and $\delta(pq^*p^*)=\delta(q^*)$.
\item[$(3)$] For all $p,q \in \clpath (E)$ such that $p \approx q$, we have $\delta (p) = \delta (q)$ and $\delta (p^*) = \delta (q^*)$. $($See Definition~\ref{approx-def} for the relation $\, \approx$.$)$
\item[$(4)$] For all regular $v \in E^0$ we have $\delta (v) = \sum _{e\in \so^{-1}(v)} \delta (\ra(e))$.
\end{enumerate}
The one-to-one correspondence maps $K$-linear traces on $L_K(E)$ to their restrictions to $G_E$.
\end{theorem}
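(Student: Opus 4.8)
The plan is to build the correspondence by combining the results already established for Cohn path algebras with the quotient machinery of Proposition~\ref{traces-on-quotients}. Recall that $L_K(E) = C_K(E)/N = \overline{KG_E}/N$, so a $K$-linear trace $t : L_K(E) \to R$ is the same thing as a $K$-linear trace $t_\delta : \overline{KG_E} \to R$ with $t_\delta(N) = 0$, via the assignment $t \mapsto t \circ \pi$ where $\pi$ is the quotient map. By Proposition~\ref{C-linear-trace}, the $K$-linear traces on $\overline{KG_E}$ are precisely the maps $t_\delta$ arising from central zero-preserving maps $\delta : G_E \to R$, and by Proposition~\ref{central-graph-maps} such maps are exactly those satisfying conditions (1), (2), and (3) in the statement. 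So the entire content reduces to showing that, for a central $\delta$, the condition $t_\delta(N) = 0$ is equivalent to condition (4).

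First I would fix the direction $t_\delta(N) = 0 \Rightarrow (4)$. Since $N$ is generated (as an ideal) by the elements $v - \sum_{e \in \so^{-1}(v)} ee^*$ for regular $v$, and these elements themselves lie in $N$, applying $t_\delta$ to such a generator and using $K$-linearity of $t_\delta$ gives $\delta(v) - \sum_{e \in \so^{-1}(v)} \delta(ee^*) = 0$. Now $ee^* = e\,\ra(e)\,e^*$ and $\ra(e) \in E^0 \subseteq \clpath(E)$, so by condition (2) we have $\delta(ee^*) = \delta(\ra(e))$, which yields condition (4) directly. Conversely, for $(4) \Rightarrow t_\delta(N) = 0$, the main point is that it suffices to kill $t_\delta$ on a generating set of $N$ as an \emph{additive} group, and here is where I would invoke the cited result~\cite[Lemma 9]{AM}: for every regular $v$ and every $p \in \path(E) \setminus E^0$, both $(v - \sum ee^*)p$ and $p^*(v - \sum ee^*)$ vanish. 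This means that an arbitrary element of $N$, which a priori is a $K$-linear combination of two-sided products $a(v - \sum ee^*)b$ with $a,b \in \overline{KG_E}$, collapses: writing $a,b$ in terms of the basis elements $pq^*$ of $G_E$ and using the lemma to annihilate all the path-extending factors, one finds that every such product reduces to a $K$-multiple of a conjugate $g(v - \sum ee^*)g^*$ (or of the generator itself), on which $t_\delta$ vanishes by the trace property together with condition (4).

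The step I expect to be the main obstacle is precisely this reduction in the converse direction: verifying carefully that multiplying the generator $v - \sum_{e} ee^*$ on the left and right by arbitrary elements of $\overline{KG_E}$ produces only things $t_\delta$ already annihilates. The cited Lemma~9 is the key tool, since it shows that the left and right multiplications by any path of positive length kill the generator, so the only surviving products are conjugations by idempotents/vertices and scalar multiples of the generator, which are handled by centrality of $\delta$ and condition (4). Once both implications are in hand, the bijection is immediate: the map $t \mapsto t|_{G_E}$ is injective because the $G_E$-elements span $L_K(E)$ over $K$, and it is surjective onto the set of $\delta$ satisfying (1)--(4) because each such $\delta$ yields a $t_\delta$ vanishing on $N$, hence descends to a genuine $K$-linear trace on the quotient $L_K(E)$. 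I would close by remarking that this is exactly the content of Proposition~\ref{traces-on-quotients} applied to $I = N$, with the equivalence "(4) $\iff$ $t_\delta(N)=0$" supplying its hypothesis.
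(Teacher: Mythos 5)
Your proposal follows the paper's proof essentially step for step: both reduce via Proposition~\ref{C-linear-trace} and Proposition~\ref{central-graph-maps} to showing that, for a central zero-preserving $\delta$, condition (4) is equivalent to $t_\delta(N)=0$; both invoke \cite[Lemma 9]{AM} together with the trace property to collapse arbitrary products $y\bigl(v-\sum_{e\in\so^{-1}(v)}ee^*\bigr)z$ down to the generator itself; and both conclude with Proposition~\ref{traces-on-quotients}. The only cosmetic difference is that the paper performs the collapse by cycling with the trace identity twice (first $t_\delta(yxz)=t_\delta(zyx)$, then $t_\delta(pq^*x)=t_\delta(q^*xp)$, after which Lemma~9 leaves only $x$ itself) where you speak of reduction to conjugates $gxg^*$, but your stated tools carry out the identical computation.
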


\begin{proof}
Let $t: L_K(E) \to R$ be a $K$-linear trace, and let $\delta=t|_{G_E}$ be the restriction of $t$ to $G_E$. Then $\delta$ clearly must preserve zero. Since $t$ is central, $\delta$ must be central as well, and hence must satisfy conditions (1)--(3), by Proposition~\ref{central-graph-maps}. Also, if $v \in E^0$ is regular, then, by Definition~\ref{LPAdefinition},  $v=\sum_{e\in \so^{-1}(v)} ee^*$. Hence $$\delta(v) = t\Big(\sum_{e\in \so^{-1}(v)} ee^*\Big) = \sum_{e\in \so^{-1}(v)} t(ee^*) = \sum_{e\in \so^{-1}(v)} \delta(e^*e) = \sum_{e\in \so^{-1}(v)} \delta(\ra(e)),$$ and therefore, $\delta$ satisfies (4) as well. Thus, $t \mapsto t|_{G_E}$ gives a map from the set of all $K$-linear traces $t: L_K(E) \to R$ to the set of all maps $\delta : G_E \to R$ that preserve zero and satisfy conditions (1)--(4).

Next, suppose that $\delta : G_E \to R$ is a map that preserves zero and satisfies conditions (1)--(4). Then $\delta$ is central, by Proposition~\ref{central-graph-maps}. Let $t_\delta : C_K(E) = \overline{KG_E} \to R$ be as in Definition~\ref{semigp-trace-def}. Then $t_\delta$ is a $K$-linear trace, by Proposition~\ref{C-linear-trace}. We wish to show that $t_\delta (N) = 0$, where $N \subseteq C_K(E)$ is as in Definition~\ref{LPAdefinition}. To do so, it suffices to prove that for every generator $x = v-\sum_{e\in \so^{-1}(v)} ee^*$ of $N$ and any two elements $y,z \in C_K(E)$, we have $t_\delta (yxz) = 0$. But since $t_\delta$ is a trace, $t_\delta (yxz) = t_\delta (zyx)$, and hence we only need to show that $t_\delta (yx) = 0$ for all $y \in C_K(E)$. As $t_\delta$ is $K$-linear, we may further assume that $y=pq^*$ for some $p,q \in \path (E)$. Again, since $t_\delta$ is a trace, we have $t_\delta(yx)=t_\delta(pq^*x)=t_\delta(q^*xp)$.  But by~\cite[Lemma 9]{AM}, the expression $q^*xp$ is zero unless $q^* = v = p$, in which case $q^*xp = x$.  Thus it suffices to show that $t_\delta (x) = 0$. Now $$t_\delta(x) = t_\delta\Big(v-\sum_{e\in \so^{-1}(v)} ee^*\Big) = \delta (v) - \sum_{e\in \so^{-1}(v)} \delta(ee^*) = \delta (v) - \sum_{e\in \so^{-1}(v)} \delta(\ra(e))=0,$$ since $\delta$ satisfies condition (4), and hence $t_\delta (N) = 0$.

Thus, defining $\bar{t}_\delta : L_K(E) = C_K(E)/N \to R$ via $\bar{t}_\delta (x+N) = t_\delta (x)$ gives a $K$-linear trace, by Proposition~\ref{traces-on-quotients}. Thus, $\delta \mapsto \bar{t}_\delta$ gives a function from the set of all maps $\delta : G_E \to R$ that preserve zero and satisfy conditions (1)--(4) to the set of all $K$-linear traces $t: L_K(E) \to R$. Note that the restriction of $\bar{t}_\delta$ to $G_E$ is $\delta$, from which it is easy to see that the two maps $t \mapsto t|_{G_E}$ and $\delta \mapsto \bar{t}_\delta$ defined above are inverses of each other (in either order). Thus we have the desired one-to-one correspondence.
\end{proof}

Our final goal is to describe Leavitt path algebras that admit faithful traces. We begin with a preliminary result.

\begin{proposition} \label{prop-of-graph-trace-lemma}
Let $K$ be a field with involution $*$, $R$ a $*$-ring, $E$ a graph, and $t: L_K(E) \to R$ a trace. If $t$ is positive, then the following hold.
\begin{enumerate}
\item[$(1)$] For all $v \in E^0$ we have $t (v) \geq 0$.
\item[$(2)$] For all $v, w \in E^0$, if there is a path $p \in \path (E)$ such that $\so(p) =v$ and $\ra(p) =w$, then $t(v) \geq t(w)$.
\item[$(3)$] For all $v \in E^0$ and distinct $e_1, \dots, e_n \in E^1$ with $v$ as the source, $t(v) \geq \sum_{i=1}^n t(\ra(e_i))$.
\end{enumerate}
Moreover, if $t$ is faithful, then the following hold.
\begin{enumerate}
\item[$(4)$]  For all $v \in E^0$ we have $t(v) > 0$.
\item[$(5)$]  For any $x \in L_K(E)$ and any idempotent $u\in L_K(E)$, satisfying $xu=x=ux$ and $x^*u=x^*=ux^*$, we have $xx^* = u$ if and only if $x^*x = u$.
\item[$(6)$]  $E$ is a no-exit graph.
\end{enumerate}
\end{proposition}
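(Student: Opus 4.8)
The plan is to establish the six statements in order, since the positivity claims (1)--(3) feed into the faithfulness claims (4)--(6). First I would prove (1) by exhibiting each vertex as a positive element: for $v \in E^0$ we have $v = vv^* = vv$ (using $v^*=v$ and idempotency), so $v \geq 0$, and positivity of $t$ gives $t(v) \geq 0$. For (2), given a path $p$ with $\so(p)=v$ and $\ra(p)=w$, I would write $v - pp^* = v - pp^*$ and observe that this element is positive: since $p^*p = \ra(p) = w$ and $pp^* \leq v$ in the natural partial order of idempotents, the element $v - pp^*$ should be expressible as a sum of terms $yy^*$. Concretely, I expect $v - pp^*$ to itself be a positive element (one can check $(v-pp^*)$ is a self-adjoint idempotent, hence equals its own square times itself and is positive), so $t(v) - t(pp^*) \geq 0$; then $t(pp^*) = t(p^*p) = t(w)$ by the trace property, yielding $t(v) \geq t(w)$. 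For (3), the element $v - \sum_{i=1}^n e_ie_i^*$ is positive by the same reasoning (the $e_ie_i^*$ are orthogonal idempotents below $v$ by the CK1 relation $e_i^*e_j = \delta_{ij}\ra(e_i)$), so applying $t$ and using $t(e_ie_i^*) = t(e_i^*e_i) = t(\ra(e_i))$ gives the inequality.

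For the faithfulness claims, (4) is immediate: each $v \geq 0$ is in fact $v > 0$ since $v \neq 0$ in $L_K(E)$, so faithfulness forces $t(v) > 0$. For (5), the plan is to use faithfulness to deduce an equality of idempotents from an equality of traces. Assuming $xx^* = u$, I would compute $t((x^*x - u)(x^*x-u)^*)$ and show it vanishes. The key computation: under the hypotheses $xu = x = ux$, the elements $x^*x$ and $u$ both behave like the support projection, and one shows $t(x^*x) = t(xx^*) = t(u)$ by the trace property. Then $x^*x - u$ is self-adjoint with $t((x^*x-u)^2) = t(x^*x x^*x) - 2t(x^*x u) + t(u) $, which I would simplify using $xx^*=u$ and the absorption relations to obtain zero; faithfulness then gives $x^*x - u = 0$. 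The reverse implication follows by the symmetric argument (swapping $x$ and $x^*$).

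Claim (6) is the main obstacle and the payoff of the preliminary inequalities. The plan is to argue by contradiction: suppose $E$ has a cycle $c = e_1 \cdots e_n$ with an exit $f$. I would let $v = \so(e_1)$ be a vertex on the cycle that is also the source of the exit $f$ (so $f \neq e_i$ for the relevant $i$). Applying (3) at this vertex with the distinct edges including both the cycle edge leaving $v$ and the exit $f$, I get a strict-looking inequality $t(v) \geq t(\ra(e_i)) + t(\ra(f)) + \cdots$. Traversing the cycle and repeatedly applying (2) along the edges of $c$, I obtain $t(\ra(e_i)) \geq \cdots \geq t(v)$ (since the cycle returns to $v$). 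Combining these, the exit contributes an extra positive term $t(\ra(f)) > 0$ (positive by (4)), forcing $t(v) \geq t(v) + t(\ra(f))$, hence $t(\ra(f)) \leq 0$, contradicting (4). The delicate part will be bookkeeping the partial order $\geq$ correctly: I must ensure that the inequalities chain in the right direction and that the strict positivity from the exit is not absorbed, which requires using that $t(\ra(f)) > 0$ together with the antisymmetry-like cancellation that faithfulness provides. Care is needed because $\geq$ is only reflexive and transitive (not obviously antisymmetric), so I would phrase the final contradiction purely additively: deriving $t(v) \geq t(v) + t(\ra(f))$ and then $0 \geq t(\ra(f)) > 0$.
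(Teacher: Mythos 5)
Your arguments for (1)--(5) are essentially the paper's own: each of $v$, $v-pp^*$, $v-\sum_{i=1}^n e_ie_i^*$, and $u-x^*x$ is a self-adjoint idempotent, hence of the form $yy^*$ and positive, and the trace property converts $t(pp^*)$ into $t(p^*p)=t(\ra(p))$, etc. One small point in (5) deserves to be made explicit: faithfulness applied to the positive element $(x^*x-u)(x^*x-u)^*$ only yields that this \emph{product} is zero, which in a general $*$-algebra does not force $x^*x-u=0$; what saves the argument (and what the paper's computation shows) is that the absorption relations and $xx^*=u$ make $u-x^*x$ idempotent, so the product literally \emph{equals} $u-x^*x$, and its vanishing is exactly the desired conclusion.

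Part (6), however, has a genuine gap, and it is precisely the one you flagged but did not escape. Your final contradiction is ``$0\geq t(\ra(f))$ and $t(\ra(f))>0$,'' i.e., both $-a\geq 0$ and $a>0$ for $a=t(\ra(f))\in R$. In an arbitrary $*$-ring this is \emph{not} a contradiction: $a\geq 0$ means $a$ is a sum of elements $yy^*$, and the set of positives need not be a proper cone. For instance, in $\C$ with the identity involution (the involution used in the paper's own counterexamples) one has $-1=i\cdot i^*\geq 0$ and $1=1\cdot 1^*>0$; indeed there every element of $R$ is positive, so the inequality chain from (2)--(4) carries no information at all. Your argument would go through if $R$ carried a positive definite involution (then $a\geq 0$ and $-a\geq 0$ force $a=0$), but the proposition assumes only that $R$ is a $*$-ring, and the paper needs this generality when it invokes (6) to prove the implication $(5)\Rightarrow(6)$ of its main theorem on faithful traces. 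The correct mechanism is to apply faithfulness to positive elements of $L_K(E)$ whose trace is \emph{exactly} zero, rather than to manipulate inequalities in $R$. The paper does this through (5): with $p$ a cycle with exit $e$ based at $v$ and $w\neq v$ another vertex, $x=p+w$ satisfies $x^*x=v+w$, so (5) gives $xx^*=pp^*+w=v+w$, hence $pp^*=v$, and then $e^*=e^*v=e^*pp^*=0$ (using $e^*p=0$ since $e$ differs from the first edge of $p$), contradicting $e^*\neq 0$; the one-vertex case is treated separately. Alternatively, your setup can be repaired directly without (5): for the cycle $p$ rotated to start at the exit vertex $v$, the element $v-pp^*$ is positive with $t(v-pp^*)=t(v)-t(p^*p)=t(v)-t(v)=0$, so faithfulness forces $pp^*=v$, and the same computation $e^*=e^*pp^*=0$ yields the contradiction. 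Either way, the exit must be killed algebraically inside $L_K(E)$; the order structure of $R$ alone cannot do it.
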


\begin{proof}
Suppose that $t$ is positive. To show (1), we note that $0 \leq t(vv^*) = t(v)$ for all $v \in E^0$. Next, let $v$, $w$, and $p$ be as in (2), and set $x = v -pp^*$. Then $xx^* = v -pp^*$, and therefore $$0 \leq t(xx^*) = t(v)-t(pp^*) = t(v) -  t(p^*p) = t(v)-t(w).$$

For (3), let $e_1, \dots, e_n \in E^1$ be distinct edges with source $v \in E^0$, and set $x=v - \sum_{i=1}^n e_ie_i^*$. Then $xx^*=v - \sum_{i=1}^n e_ie_i^*$, and therefore $$0\leq t(xx^*) = t(v) - \sum_{i=1}^n t (e_ie_i^*) = t(v) - \sum_{i=1}^n t(e_i^*e_i) = t(v) - \sum_{i=1}^n t(\ra(e_i)).$$

Now suppose that $t$ is faithful. Then, $0 < t(vv^*) = t (v)$ for all $v \in E^0$, which proves (4). Next, let $x$ and $u$ be as in (5). If $xx^* = u$, then $u=u^*$ and hence $$0 \leq (u-x^*x)(u-x^*x)^* = u-x^*x-x^*x+x^*ux = u-x^*x .$$ Since $t$ is faithful, $$t(u-x^*x) = t(u) - t(x^*x) = t(u)-t(xx^*) = 0$$ implies that $x^*x = u$. The converse follows by symmetry.

To show (6), first suppose that $E$ has at least two vertices, and that $p \in \path(E)$ is a cycle with an exit $e \in E^1$. We may assume that $\so(p)=\ra(p)=\so(e)$, and we shall denote this vertex by $v$. Let $w \in E^0 \setminus \{v\}$ be any vertex, and let $x = p+w$. Then $$x^*x = (p^*+w)(p+w) = p^*p + w = v+w.$$ Now, $v+w$ is an idempotent which clearly satisfies $x(v+w)=x=(v+w)x$ and $x^*(v+w)=x^*=(v+w)x^*$. Therefore, by (5) we have $$v+w = xx^* = (p+w)(p^*+w) = pp^*+w,$$ and hence, $v = pp^*$. But then $0 = e^*pp^* = e^*v=e^*$ contradicts our choice of $e$, implying that there cannot be a cycle with an exit.

Next, suppose that $E^0 = \{v\}$, and there is an edge $e \in E^1$ (necessarily a loop). Then $e^*e=v=1$ implies that $ee^*=v$, by (5). It follows that there cannot be another edge in $E$, for if $f \in E^1 \setminus \{e\}$, then $0=f^*ee^* = f^*v=f^*$, a contradiction. Thus, in all cases, no cycle in $E$ can have an exit.
\end{proof}

Let us pause to give an example showing that satisfying conditions (1)--(3) above is generally not sufficient for a trace $t$ to be positive, and satisfying conditions (1)--(6) is generally not sufficient for $t$ to be faithful.

\begin{example}
Let $K=\C$ be the field of the complex numbers, with the usual involution $*$, defined by $(a+bi)^*=a-bi$. Then the positive elements here are precisely the nonnegative real numbers. Also let $E$ be the one-loop graph, with $E^0 = \{v\}$ and $E^1 = \{e\}$, pictured below.
$$\xymatrix{{\bullet}^v \ar@(ur,dr)^e}$$ 
Using the fact that $e^*e=v=ee^*$, it is easy to see (and well-known) that $$\{v\} \cup \{e^n \mid n \geq 1\} \cup \{(e^*)^n \mid n \geq 1\}$$ is a basis for $L_\C(E)$ as a $\C$-vector space. In particular, $L_\C(E)$ is commutative. Now define $t : L_\C(E) \to \C$ by $t(v) = 1$, $t(e^n) = i^n$, $t((e^*)^n) = i^n$, and extend $\C$-linearly to all of $L_\C(E)$. Since $L_\C(E)$ is commutative, $t$ is a $\C$-linear trace. Since $t(v)=1$ and $v$ is the only vertex in $E$, conditions (1)--(6) of Proposition~\ref{prop-of-graph-trace-lemma} are clearly satisfied. However, letting $x = v+e$, we see that $$t(xx^*) = t((v+e)(v+e^*)) = t(v + e + e^* + v) = 2 + 2i.$$ Since $2 + 2i$ is not a nonnegative real number, $t$ is not positive, and consequently not faithful either.
\end{example}

We require one more ingredient to prove our main result about faithful traces on Leavitt path algebras; it is part of~\cite[Theorem 3.7]{AAPM}. In order to state it we first recall some terminology from~\cite{AAPM}. Given a graph $E$ and an infinite path $p=e_1e_2\hdots$ ($e_i \in E^1$), we say that $p$ is an \emph{infinite sink} if it has neither cycles nor exits. We also say that $p$ \emph{ends in a sink} if there is a subpath $e_ne_{n+1}\hdots$ ($n\geq 1$) which is an infinite sink, and that $p$ \emph{ends in a cycle} if there is subpath $e_ne_{n+1}\hdots$ ($n\geq 1$) and a cycle $c \in \path(E)\setminus E^0$ such that $e_ne_{n+1}\hdots = cc\hdots$.

\begin{theorem}[Abrams/Aranda Pino/Perera/Siles Molina] \label{APPM-theorem}
Let $K$ be a field and $E$ a countable row-finite graph. Then the following are equivalent.
\begin{enumerate}
\item[$(1)$] $E$ is a no-exit graph where every infinite path ends either in a sink or in a cycle.
\item[$(2)$] $L_K(E) \cong \bigoplus_{i\in I} \M_{\kappa_i} (K) \oplus \bigoplus_{i \in J} \M_{\kappa_i} (K[x,x^{-1}]),$ where $I$ and $J$ are countable sets, and each $\kappa_i$ is a countable cardinal.
\end{enumerate}
\end{theorem}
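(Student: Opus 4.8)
The statement to prove is Theorem~\ref{APPM-theorem}, the structure theorem characterizing when a Leavitt path algebra decomposes as a direct sum of matrix algebras over $K$ and over $K[x,x^{-1}]$. This is attributed to Abrams, Aranda Pino, Perera, and Siles Molina, and the excerpt explicitly states it is part of their \cite[Theorem 3.7]{AAPM}. Since this is a cited external result rather than one the present authors are establishing, I would not expect to reprove it from scratch; nevertheless, here is how I would approach a proof sketch.

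The plan is to prove the two implications separately, with the direction $(1) \Rightarrow (2)$ being the substantive one. First I would recall the structural dichotomy for vertices and infinite paths: under the hypothesis that $E$ is row-finite and every infinite path ends in a sink or a cycle, one partitions the graph according to where paths terminate. The key observation is that in a no-exit graph, every cycle $c$ is isolated in the sense that no other edge leaves any vertex of $c$, so the cycle behaves like a single loop after contracting, and the loop relation $e^*e = v = ee^*$ (visible in the one-loop example in the excerpt) makes the corner at the cycle isomorphic to $K[x,x^{-1}]$. First I would establish that the set of vertices from which one can reach a fixed sink (or a fixed cycle) spans a matrix subalgebra: the paths ending at that sink serve as a set of matrix units, giving a copy of $\M_{\kappa}(K)$, while the paths feeding into an isolated cycle give $\M_{\kappa}(K[x,x^{-1}])$. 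Then I would argue that these corners are mutually orthogonal ideals summing to all of $L_K(E)$, using the hypothesis that \emph{every} infinite path ends in a sink or cycle to guarantee that no vertex is ``left over'' and that the decomposition exhausts the algebra. Countability of $E$ forces $I$, $J$, and each $\kappa_i$ to be countable.

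For the converse $(2) \Rightarrow (1)$, I would argue contrapositively using ring-theoretic invariants that distinguish the two sides. If $E$ had a cycle with an exit, then $L_K(E)$ would contain (by the Reduction Theorem or by purely infinite corner arguments) an infinite idempotent or a non-artinian, non-noetherian behavior incompatible with a direct sum of matrix rings over $K$ or $K[x,x^{-1}]$, both of which are rings whose idempotents are finite. Similarly, an infinite path ending in neither a sink nor a cycle would produce a corner that is neither a matrix ring over $K$ (arising from sinks) nor over $K[x,x^{-1}]$ (arising from isolated cycles). The cleanest route is to note that the right-hand decomposition forces $L_K(E)$ to be what is classically called an algebra ``of a particular ideal-structure type,'' and to read off the graph conditions from the known graph-to-algebra dictionary.

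The main obstacle, and the reason I would ultimately defer to the citation rather than reconstruct the argument, is the forward direction $(1) \Rightarrow (2)$: organizing the matrix-unit bookkeeping so that the corners are simultaneously shown to be isomorphic to the claimed matrix algebras \emph{and} to be orthogonal ideals exhausting $L_K(E)$ requires careful handling of infinite paths and of the row-finite, possibly infinite, index sets. Establishing that every element lies in the span of these corners (completeness of the decomposition) is precisely where the hypothesis ``every infinite path ends in a sink or cycle'' does its work, and making that exhaustion rigorous is the technical heart of \cite[Theorem 3.7]{AAPM}. Since the present paper only \emph{invokes} this theorem as an ingredient for Theorem~\ref{faithful-LPA}, I would state it as quoted and refer the reader to \cite{AAPM} for the full proof.
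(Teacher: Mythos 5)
Your decision to state the theorem and defer to \cite[Theorem 3.7]{AAPM} matches the paper exactly: the paper gives no proof of this statement either, attributing it to Abrams, Aranda Pino, Perera, and Siles Molina. But since you offered a sketch, one concrete gap in it should be named: you read ``ends in a sink'' as ``reaches a sink vertex,'' so your decomposition has only two kinds of summands --- corners at sink vertices giving $\M_\kappa(K)$ and corners at no-exit cycles giving $\M_\kappa(K[x,x^{-1}])$. Condition (1), with the terminology recalled just before the theorem, also allows infinite paths ending in an \emph{infinite sink}, i.e., a tail with neither cycles nor exits; no sink vertex need exist at all (consider $\bullet\to\bullet\to\bullet\to\cdots$, whose Leavitt path algebra is an infinite matrix algebra over $K$). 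These give a third family of summands, and the matrix-unit bookkeeping there is genuinely different: two paths $q_{ij}$ and $q_{il}$ ending at \emph{different} vertices along the infinite sink $u_i$ must be joined through a connecting segment $x_{ijl}$ of $u_i$ itself, so the matrix units are $q_{ij}x_{ijl}q_{il}^*$ rather than elements $pq^*$ with common range. A sketch omitting this family would fail to exhaust $L_K(E)$ --- precisely at the exhaustion step you yourself identify as the technical heart. The paper records exactly this data (the families $\{s_i\}_{i\in I_1}$, $\{u_i\}_{i\in I_2}$, $\{c_i\}_{i\in J}$ and the basis built from $p_{ij}p_{il}^*$, $q_{ij}x_{ijl}q_{il}^*$, $r_{ij}c_i^kr_{il}^*$) in the discussion following the theorem, because it needs the explicit isomorphism, not merely its existence, in order to verify $*$-compatibility and to drop countability in Corollary~\ref{LPA-matrix}; so ``defer entirely to the citation'' slightly undersells what the paper actually extracts from the proof.

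Your converse sketch is acceptable for a deferred proof: a cycle $c$ with an exit yields $c^*c = \so(c)$ while $cc^* \neq \so(c)$, contradicting the direct finiteness of the right-hand side, and infinite paths ending in neither a sink nor a cycle are ruled out by chain-condition arguments of the kind \cite{AAPM} uses; this is in the spirit of the original argument, though as written it leans on an unspecified ``graph-to-algebra dictionary.''
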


Assuming that $K$ has an involution, the isomorphism $$f : L_K(E) \cong \bigoplus_{i\in I} \M_{\kappa_i} (K) \oplus \bigoplus_{i \in J} \M_{\kappa_i} (K[x,x^{-1}])$$ constructed in the proof of $(1) \Rightarrow (2)$ of the above result is actually a $K$-linear $*$-isomorphism, viewing each $\M_{\kappa_i} (K)$ and $\M_{\kappa_i} (K[x,x^{-1}])$ as a $*$-ring via Lemmas~\ref{inv-matrix} and~\ref{inv-laurent}. 

More specifically, let $\{c_i\}_{i \in J}$ be all the cycles in $\path (E) \setminus E^0$, let $\{s_i\}_{i \in I_1}$ be all the sinks in $E^0$, and let $\{u_i\}_{i \in I_2}$ be all the infinite sinks in $E$ (where distinct $u_i$ have no edges in common, and where the starting vertex of each $u_i$ is fixed, though it can be chosen arbitrarily). Also, for each $i \in I_1$ let $\{p_{ij}\}_{j \in L_i}$ be all the paths that end in $s_i$, for each $i \in I_2$ let $\{q_{ij}\}_{j \in M_i}$ be all the paths that end in $u_i$ (such that $\ra(q_{ij})$ is the only vertex lying on both $q_{ij}$ and $u_i$), and for each $i \in J$ let $\{r_{ij}\}_{j \in N_i}$ be all the paths that end in $\so(c_i)$ but do not contain $c_i$ (where $\so(c_i)$ is a fixed, though arbitrary, vertex on $c_i$). It can be shown that $$\bigcup_{i \in I_1} \{p_{ij}p_{il}^* \mid j,l \in L_i\} \cup \bigcup_{i \in I_2} \{q_{ij}x_{ijl}q_{il}^* \mid j,l \in M_i\} \cup \bigcup_{i \in J} \{r_{ij}c_i^kr_{il}^* \mid j,l \in N_i, k\in \N\}$$ is a basis for $L_K(E)$, where $c_i^k$ is understood to be $\so(c_i)$ when $k=0$ and $(c_i^*)^{-k}$ when $k < 0$, and where, writing each $u_i$ as $u_i = e_{i1}e_{i2}\hdots$ ($e_{ij} \in E^1$), we define
$$x_{ijl} = \left\{ \begin{array}{cl}
e_{ik}e_{i,k+1}\hdots e_{im} & \text{if } \ra(q_{ij}) = \so(e_{ik}), \ra(q_{il}) = \ra(e_{im}), \text{ and } k\leq m\\
e_{ik}^*e_{i,k-1}^*\hdots e_{im}^* & \text{if } \ra(q_{ij}) = \ra(e_{ik}), \ra(q_{il}) = \so(e_{im}), \text{ and } k \geq m\\
\ra(q_{ij}) & \text{if } \ra(q_{ij})=\ra(q_{il})
\end{array}\right..$$
Then $f$ maps $L_K(E)$ to the ring $$\bigoplus_{i\in I_1} \M_{|L_i|} (K) \oplus \bigoplus_{i \in I_2} \M_{|M_i|} (K)  \oplus \bigoplus_{i \in J} \M_{|N_i|} (K[x,x^{-1}]),$$ via setting $f(p_{ij}p_{il}^*) = e_{jl} \in \M_{|L_i|} (K)$, $f(q_{ij}x_{ijl}q_{il}^*) = e_{jl} \in \M_{|M_i|} (K)$, and $f(r_{ij}c_i^kr_{il}^*) = x^ke_{jl} \in \M_{|N_i|} (K[x,x^{-1}])$, and then extending $K$-linearly to all of $L_K(E)$. Note that for all such elements we have $$f((p_{ij}p_{il}^*)^*) = f(p_{il}p_{ij}^*) = e_{lj} = e_{jl}^* = f(p_{ij}p_{il}^*)^*,$$ $$f((q_{ij}x_{ijl}q_{il}^*)^*) = f(q_{il}x_{ijl}^*q_{ij}^*) = e_{lj} = e_{jl}^* = f(q_{ij}x_{ijl}q_{il}^*)^*, \text{ and}$$ $$f((r_{ij}c_i^kr_{il}^*)^*) = f(r_{il}c_i^{-k}r_{ij}^*) = x^{-k}e_{lj} = (x^ke_{jl})^* = f(r_{ij}c_i^kr_{il}^*)^*,$$ from which it follows that $f(y^*) = f(y)^*$ for all $y \in L_K(E)$. 

We also observe that the proof of Theorem~\ref{APPM-theorem} does not rely on $E$ being countable to work. Thus, with trivial modifications, the proof actually shows the following.

\begin{corollary} \label{LPA-matrix}
Let $K$ be a field with an involution, and let $E$ be a row-finite graph. Then the following are equivalent.
\begin{enumerate}
\item[$(1)$] $E$ is a no-exit graph where every infinite path ends either in a sink or in a cycle.
\item[$(2)$] $L_K(E) \cong \bigoplus_{i\in I} \M_{\kappa_i} (K) \oplus \bigoplus_{i \in J} \M_{\kappa_i} (K[x,x^{-1}]),$ for some sets $I, J$ and cardinals $\kappa_i$.
\end{enumerate}
Moreover, the isomorphism in $\, (2)$ can be taken to be a $K$-linear $\, *$-isomorphism.
\end{corollary}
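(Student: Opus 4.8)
The plan is to obtain the corollary directly from Theorem~\ref{APPM-theorem} together with the explicit construction recorded in the paragraphs immediately preceding the statement, rather than reproving the structure theorem from scratch. The only content beyond the countable case of Theorem~\ref{APPM-theorem} is the removal of the countability hypothesis on $E$ and the strengthening of the isomorphism in $(2)$ to a $K$-linear $*$-isomorphism, and both of these have essentially been set up already.

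For the equivalence $(1)\Leftrightarrow(2)$, I would first point out that the description of the basis of $L_K(E)$ and of the map $f$ given above was phrased for an arbitrary row-finite graph: at no point were $E^0$, $E^1$, or any of the index sets $I_1, I_2, J, L_i, M_i, N_i$ taken to be countable. Hence the implication $(1)\Rightarrow(2)$ holds verbatim, producing an isomorphism onto $\bigoplus_{i\in I_1}\M_{|L_i|}(K)\oplus\bigoplus_{i\in I_2}\M_{|M_i|}(K)\oplus\bigoplus_{i\in J}\M_{|N_i|}(K[x,x^{-1}])$. For $(2)\Rightarrow(1)$ I would verify that the argument for this implication in the proof of Theorem~\ref{APPM-theorem} is likewise insensitive to cardinality, so that the same reasoning applies; alternatively, one can reduce to the countable case, since any failure of $(1)$ --- a cycle with an exit, or an infinite path ending in neither a sink nor a cycle --- is already witnessed inside a countable subgraph of $E$, to which Theorem~\ref{APPM-theorem} can be applied.

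The remaining assertion, that the isomorphism can be taken to be a $K$-linear $*$-isomorphism, is essentially already discharged: the three displayed computations above verify that $f(y^*)=f(y)^*$ for every basis element $y$, where the target carries the componentwise involution built from the conjugate-transpose involution on each $\M_{\kappa_i}(K)$ and from the involution on each $\M_{\kappa_i}(K[x,x^{-1}])$ supplied by Lemmas~\ref{inv-matrix} and~\ref{inv-laurent}. Since the involution on $L_K(E)$ is conjugate-$K$-linear and $f$ is $K$-linear, the identity $f(y^*)=f(y)^*$ propagates from the basis to all of $L_K(E)$; as $f$ is a bijection, it is therefore a $*$-isomorphism.

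I expect the main obstacle to be the direction $(2)\Rightarrow(1)$ in the uncountable setting, which is the one not laid out concretely in the preceding discussion and which I am inheriting from the proof of Theorem~\ref{APPM-theorem}. The delicate point is to confirm that the algebraic detection of cycles with exits and of non-terminating infinite paths does not secretly rely on a countable enumeration; the subgraph-witness reduction sketched above is the cleanest way to make this rigorous, since it defers entirely to the already-established countable result.
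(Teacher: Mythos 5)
Your main line coincides with the paper's own justification, which consists of nothing more than the discussion preceding the corollary (the explicit basis, the map $f$, and the three displayed computations giving $f(y^*)=f(y)^*$ on basis elements, which propagates to all of $L_K(E)$ exactly as you say, since both involutions are conjugate-$K$-linear and $f$ is $K$-linear) together with the remark that the proof of Theorem~\ref{APPM-theorem} nowhere uses countability and so ``with trivial modifications'' establishes the corollary. So for $(1)\Rightarrow(2)$ and the $*$-isomorphism claim your write-up is correct and essentially identical to the paper's.

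The one genuine flaw is the fallback you single out as ``the cleanest way to make this rigorous'': the countable-subgraph witness reduction for $(2)\Rightarrow(1)$ does not work as stated. If $(1)$ fails for $E$, the offending configuration (a cycle with an exit, or a non-terminating infinite path) does lie in a countable subgraph $F$, which by row-finiteness can even be taken complete, so that $L_K(F)$ embeds in $L_K(E)$ (for an arbitrary subgraph one does not even get a homomorphism $L_K(F)\to L_K(E)$, since the CK2 relations at a vertex of $F$ need not match those in $E$). Theorem~\ref{APPM-theorem} applied to $F$ then tells you that $L_K(F)$ is \emph{not} of the form in $(2)$ --- but this yields nothing about $L_K(E)$: a direct sum of matrix rings over $K$ and $K[x,x^{-1}]$ can perfectly well contain subalgebras not of that form, and there is no implication from ``a subalgebra fails $(2)$'' to ``the ambient algebra fails $(2)$.'' So the reduction does not ``defer entirely to the already-established countable result.'' The sound route is the first one you mention, and it is the paper's: inspect the proof of $(2)\Rightarrow(1)$ in Theorem~\ref{APPM-theorem} and confirm that it detects cycles with exits and non-terminating infinite paths without any enumeration of $E^0$ or $E^1$, so that the countability hypothesis can simply be dropped.
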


We are now ready to prove our main result about faithful traces on Leavitt path algebras.

\begin{theorem} \label{faithful-LPA}
Let $K$ be a field with a positive definite involution, and let $E$ be a row-finite graph where every infinite path ends either in a sink or in a cycle. Then the following are equivalent.
\begin{enumerate}
\item[$(1)$] There is a faithful $K$-linear trace $t: L_K(E) \to K$.
\item[$(2)$] There is a faithful $K$-linear trace $t: L_K(E) \to R$ for some $K$-algebra $R$ with an involution.
\item[$(3)$] There is a faithful $K$-linear minimal trace $t: L_K(E) \to R$ for some $K$-algebra $R$ with an involution.
\item[$(4)$] There is a faithful minimal trace $t: L_K(E) \to R$ for some $*$-ring $R$.
\item[$(5)$] There is a faithful trace $t: L_K(E) \to R$ for some $*$-ring $R$.
\item[$(6)$] $E$ is a no-exit graph.
\end{enumerate}
\end{theorem}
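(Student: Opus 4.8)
The plan is to establish the cycle $(6)\Rightarrow(3)\Rightarrow(4)\Rightarrow(5)\Rightarrow(6)$ together with the chain $(6)\Rightarrow(1)\Rightarrow(2)\Rightarrow(5)$, which makes all six statements equivalent. Most of these arrows are formal: $(3)\Rightarrow(4)$ forgets $K$-linearity (a $K$-algebra with involution is a $*$-ring and a $K$-linear trace is a trace), $(4)\Rightarrow(5)$ forgets minimality, $(1)\Rightarrow(2)$ takes $R=K$ with its given positive definite involution, and $(2)\Rightarrow(5)$ again forgets structure. The implication $(5)\Rightarrow(6)$ is immediate from Proposition~\ref{prop-of-graph-trace-lemma}(6): any faithful trace from $L_K(E)$ into a $*$-ring forces $E$ to be a no-exit graph. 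Thus all the content is concentrated in the two implications out of $(6)$.

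For $(6)\Rightarrow(3)$ I would first apply Corollary~\ref{LPA-matrix}. Since (6) says $E$ is no-exit and the standing hypothesis says every infinite path of $E$ ends in a sink or a cycle, $E$ satisfies condition (1) of that corollary, so there is a $K$-linear $*$-isomorphism
\[
f : L_K(E) \longrightarrow D := \bigoplus_{i\in I}\M_{\kappa_i}(K)\ \oplus\ \bigoplus_{j\in J}\M_{\kappa_j}(K[x,x^{-1}]),
\]
each matrix ring carrying the conjugate-transpose involution of Lemma~\ref{inv-matrix} and each $K[x,x^{-1}]$ the involution of Lemma~\ref{inv-laurent}. On each summand I put the usual matrix trace, valued in $K$ or in $K[x,x^{-1}]$, and assemble these componentwise into a single $K$-linear trace $s : D \to T$, where $T = \bigoplus_{i\in I}K \oplus \bigoplus_{j\in J}K[x,x^{-1}]$ carries the componentwise involution. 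The candidate for (3) is then $\bar t := s\circ f : L_K(E)\to T$; since $f$ is a $*$-isomorphism, $\bar t$ will be a faithful (respectively minimal) trace as soon as $s$ is, so it suffices to verify the relevant properties for $s$.

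For minimality, the usual trace on $\M_\kappa(C)$ is minimal whenever $1$ is a non-zero-divisor in $C$, by Corollary~\ref{usual-tr-min}; this applies both with $C=K$ and with $C=K[x,x^{-1}]$, so every component trace is minimal. Because in a direct sum both commutators and the kernel of a componentwise map are computed coordinatewise, $\ker s$ is the direct sum of the component commutator subgroups, which is exactly $[D,D]$; as $f$ is an isomorphism, $\ker\bar t=f^{-1}([D,D])=[L_K(E),L_K(E)]$. For faithfulness, $K$ is positive definite by hypothesis and $K[x,x^{-1}]$ is positive definite by Lemma~\ref{inv-laurent}(2), so each component matrix trace is positive and faithful by Lemma~\ref{inv-matrix}. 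Since the involutions on $D$ and $T$ are componentwise, a positive element of $D$ is coordinatewise positive and a nonzero positive element has at least one coordinate that is nonzero and positive; hence $s$ (and therefore $\bar t$) is positive, and sends nonzero positive elements to nonzero positive elements. This proves (3).

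Finally, $(6)\Rightarrow(1)$ is obtained by collapsing $T$ to $K$: push each Laurent summand to $K$ via the faithful trace $\sum a_i x^i \mapsto a_0$ of Lemma~\ref{inv-laurent}(3), leaving the $K$-summands fixed, so that $T$ becomes a direct sum of copies of $K$, and then sum the coordinates, which is a faithful $K$-linear trace by Lemma~\ref{prod-lemma}(2). Composing this with $s$ and $f$ (using Lemma~\ref{prod-lemma}(1) and the fact that precomposition with the $*$-isomorphism $f$ preserves faithfulness) yields the desired $K$-valued faithful trace $t:L_K(E)\to K$. The single delicate point --- and the main obstacle throughout --- is that faithfulness must survive the passage to a possibly infinite direct sum and the final collapse to $K$, where distinct coordinates are added and could in principle cancel. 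This is exactly where positive definiteness of $K$ is essential: it makes the positive cone of $K$ salient, so that if $a_1,\dots,a_n\geq 0$ and $\sum_k a_k=0$ then every $a_k=0$ (write each $a_k$ as a sum of terms $zz^*$ and apply positive definiteness to the combined sum). Consequently a sum of positive elements of $K$ with at least one strictly positive term is itself strictly positive, which is precisely what is needed to guarantee $t(d)>0$ whenever $d>0$; this salience is also what underlies the faithfulness assertions in Lemma~\ref{prod-lemma} that the argument invokes.
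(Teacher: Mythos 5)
Your proposal is correct and follows essentially the same route as the paper's proof: the formal implications plus Proposition~\ref{prop-of-graph-trace-lemma}(6) for $(5)\Rightarrow(6)$, and then $(6)\Rightarrow(3)$ and $(6)\Rightarrow(1)$ via the $*$-isomorphism of Corollary~\ref{LPA-matrix}, the componentwise matrix traces (minimal by Corollary~\ref{usual-tr-min}, faithful by Lemmas~\ref{inv-matrix} and~\ref{inv-laurent}), and the final collapse to $K$ using Lemma~\ref{prod-lemma}. Your added detail --- that $\ker$ of the componentwise trace equals the coordinatewise commutator subgroup, and that positive definiteness of $K$ prevents cancellation in the final coordinate sum --- correctly fills in steps the paper leaves as routine.
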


\begin{proof}
By Proposition~\ref{prop-of-graph-trace-lemma}(6), (5) implies (6). The other implications in the following diagram are tautological. 
$$\xymatrix@=.8pc{{(1)} \ar@{=>}[r] & {(2)} \ar@{=>}[rd]  & & \\
& & {(5)} \ar@{=>}[r]  & {(6)} \\ 
{(3)} \ar@{=>}[r] & {(4)} \ar@{=>}[ru]  & &}$$
We shall conclude the proof by showing that (6) implies (1) and (3).

Suppose that (6) holds. Since every infinite path in $E$ ends either in a sink or in a cycle, and $E$ is a row-finite no-exit graph, by Corollary~\ref{LPA-matrix}, $L_K(E)$ is isomorphic to $\bigoplus_{i\in I} R_i \oplus \bigoplus_{i \in J} S_i$ via a $K$-linear $*$-isomorphism, where each $R_i$ is of the form $\M_\kappa (K)$ (for some cardinal $\kappa$) and each $S_i$ is of the form $\M_\kappa (K[x,x^{-1}])$ (for some cardinal $\kappa$). 

By Lemma~\ref{inv-matrix}, each $R_i$ admits a faithful $K$-linear trace $t_i : R_i \to K$, which is in addition minimal, by Corollary~\ref{usual-tr-min}. Also, extending the involution from $K$ to $K[x,x^{-1}]$, as in Lemma~\ref{inv-laurent}(1), and then to $\M_\kappa (K[x,x^{-1}])$, as in Lemma~\ref{inv-matrix} (for any nonzero cardinal $\kappa$), makes the usual trace $\tr: \M_\kappa (K[x,x^{-1}]) \to K[x,x^{-1}]$ faithful, since, by Lemma~\ref{inv-laurent}(2), the above involution on $K[x,x^{-1}]$ is positive definite. Thus, each $S_i$ admits a faithful $K$-linear trace $u_i : S_i \to K[x,x^{-1}]$, which is again minimal, by Corollary~\ref{usual-tr-min}. Let $$f : \bigoplus_{i \in I} R_i \oplus \bigoplus_{i \in J} S_i \to \bigoplus_{i \in I} K \oplus \bigoplus_{i \in J} K[x,x^{-1}]$$ act on each direct summand as the corresponding map $t_i$ or $u_i$. Then it is easy to see that $f$ gives a faithful $K$-linear minimal trace on $\bigoplus_{i \in I} R_i \oplus \bigoplus_{i \in J} S_i$, and hence on $L_K(E)$. Thus (6) implies (3).

Now, letting $t : K[x,x^{-1}] \to K$ be as in Lemma~\ref{inv-laurent}(3), we see that for each $i \in J$, $t \circ u_i : S_i \to K$ is a faithful $K$-linear trace, by Lemma~\ref{prod-lemma}(1). Therefore, setting $t_i = t \circ u_i$ for each $i \in J$, and letting $$h : \bigoplus_{i \in I} R_i \oplus \bigoplus_{i \in J} S_i \to \bigoplus_{i \in I \cup J} K$$ act on each direct summand as the corresponding map $t_i$, again gives a faithful $K$-linear trace on $\bigoplus_{i \in I} R_i \oplus \bigoplus_{i \in J} S_i$, and hence on $L_K(E)$. Further, by Lemma~\ref{prod-lemma}(2), there is a faithful $K$-linear trace $\bigoplus_{i \in I \cup J} K \to K$. Composing this trace with $h$ (and the aforementioned $*$-isomorphism) then gives a faithful $K$-linear trace $L_K(E) \to K$, by Lemma~\ref{prod-lemma}(1). Therefore (6) implies (1).
\end{proof}

We conclude with three examples showing the necessity of the assumptions on $K$ and $E$ in the previous theorem.

\begin{example}
Let $E$ be the graph consisting of two vertices $v,w$ and one edge $e$, having source $v$ and range $w$, as pictured below.  
$$\xymatrix{\bullet^{v} \ar[r]^{e} 
& \bullet^{w} }$$
Then $E$ is a no-exit row-finite graph with no infinite paths. Also, let $K=\C$ be the field of the complex numbers, endowed with the identity involution $*$. Since $1\cdot 1^*+i\cdot i^*=0$, the involution is not positive definite. We shall show that there are no faithful $\C$-linear traces $t : L_\C (E) \to \C$, and hence that here condition (6) of Theorem~\ref{faithful-LPA} is satisfied, but not condition (1).

Let $t : L_\C (E) \to \C$ be any trace. Then $$t((v+iw)(v+iw)^*) = t(v-w) = t(ee^*)-t(e^*e) = t(ee^*) - t(ee^*) = 0,$$ and hence $t$ cannot be faithful.
\end{example}

\begin{example}
Let $K=\R$ be the field of the real numbers, endowed with the identity involution, which is easily seen to be positive definite. (Here a nonzero element is positive, in the sense of Definition~\ref{invol-def}, precisely when it is positive in the usual sense.) Also let $E$ be the graph consisting of two vertices $v,w$ and infinitely many edges $e_1, e_2, e_3, \dots$, each having source $v$ and range $w$, as pictured below.  
$$\xymatrix{\bullet^{v} \ar[r] \ar@/_.3pc/[r] \ar@/_.6pc/[r]_{\vdots} 
& \bullet^{w} }$$
Then $E$ is a no-exit graph with no infinite paths, but it is not row-finite. We shall show that there are no faithful $\R$-linear traces $t : L_\R (E) \to \R$, and hence that here condition (6) of Theorem~\ref{faithful-LPA} is satisfied, but not condition (1). 

Suppose that, on the contrary, $t : L_\R (E) \to \R$ is a faithful $\R$-linear trace. Then, by Proposition~\ref{prop-of-graph-trace-lemma}(4), we have $t(v), t(w) > 0$, and by Proposition~\ref{prop-of-graph-trace-lemma}(3), we see that for all $n \geq 1$, $$t(v) - nt(w) = t(v) - \sum_{i=1}^nt(\ra(e_i)) \geq 0.$$ Thus, $t(v) \geq nt(w) >0$ for all $n \geq 1$, which is absurd.
\end{example}

\begin{example}
Again, let $K=\R$ be the field of the real numbers, endowed with the identity involution. Now let $E$ be the following graph.
$$\xymatrix{{\bullet}^{v_1} \ar[r]^{e_1} \ar[d]_{f_1} & {\bullet}^{v_2} \ar[r]^{e_2} \ar[dl]_{f_2} & {\bullet}^{v_3} \ar[dll]_{f_3} \ar@{.}[r] & \ar@{.}[dlll]\\ 
{\bullet}^{w}}$$
Then $E$ is a row-finite no-exit graph, but with an infinite path that does not end in a sink or a cycle, namely $e_1e_2e_3\dots$. Again, we shall show that there are no faithful $\R$-linear traces $t : L_\R (E) \to \R$, and hence that condition (6) of Theorem~\ref{faithful-LPA} is satisfied, but not condition (1). 

Suppose that, on the contrary, $t : L_\R (E) \to \R$ is a faithful $\R$-linear trace. For all $i \geq 1$ we have $v_i =  e_i e_i^* + f_i f_i^*$ and hence also $$t(v_i) = t(e_i^*e_i) + t(f_i^*f_i) = t(v_{i+1}) + t(w).$$  Iterating this relation gives $t(v_1) = t(v_{i+1}) + it(w)$ for all $i \geq 1$. This produces the desired contradiction upon noting that $t(v_{i+1}), t(w) > 0$, by Proposition~\ref{prop-of-graph-trace-lemma}(4).
\end{example}

\end{document}